\newtheorem{theorem}{Theorem}[section]
\newtheorem{lemma}[theorem]{Lemma}
\theoremstyle{definition}
\newcommand{\rr}{\mathbf{R}}
\newcommand{\ttt}{\mathbf{T}}
\newcommand{\eff}{{\mathrm{eff}}}
\newcommand{\ve}{\varepsilon}
\newcommand{\mop}[1]{\mathop{\mathrm{#1}}}
\title[Spectral asymptotics for a fourth order operator]{Spectral asymptotics for a singularly perturbed fourth order locally periodic self-adjoint elliptic operator}
\subjclass{Primary: 35B27; Secondary: }
\keywords{Homogenization, spectral problem, higher order equations, localization of eigenfunctions.}
\thanks{}
\g@addto@macro{\endabstract}{\@setabstract}
\newcommand{\authorfootnotes}{\renewcommand\thefootnote{\@fnsymbol\c@footnote}}%
\begin{document}

\maketitle

\begin{center}

  \normalsize
  \authorfootnotes
  Alexandra Chechkina\footnote{The author was supported by the Norwegian Research Council under the Yggdrasil mobility program 2014.}\textsuperscript{1}, Iryna Pankratova\textsuperscript{2} and Klas Pettersson\textsuperscript{2}
  \par \bigskip

  \textsuperscript{1}Lomonosov Moscow State University, Russia \par
  \textsuperscript{2}Narvik University College, Norway\par
  \bigskip

\today
\end{center}

\begin{abstract}
We consider the homogenization of a singularly perturbed self-adjoint fourth order elliptic equation with locally periodic coefficients, stated in a bounded domain.
We impose Dirichlet boundary conditions on the boundary of the domain.
The presence of large parameters in the lower order terms and the dependence of the coefficients on the slow variable give rise to the effect of localization of the eigenfunctions.
We show that the $j$th eigenfunction can be approximated by a rescaled function that is constructed in terms of the $j$th eigenfunction of fourth or second order order effective operators with constant coefficients, depending on the large parameters.
\end{abstract}

\section{Introduction and problem statement}

We study the spectral asymptotics of a self-adjoint fourth order elliptic equation with locally periodic coefficients. The problem is stated in a bounded domain, and we impose Dirichlet boundary conditions on the boundary of the domain. The problem is a combination of homogenization and singular perturbation: because of the rapidly varying coefficients, homogenization arguments should be applied after a proper rescaling of the equation. As a result, we obtain an effective problem stated in the whole space, which will be of fourth or second order, depending on the choice of the large parameters ($\alpha, \beta$ in \eqref{eq:orig-prob}). We focus only on those cases when the localization of eigenfunctions is observed.

Similar problems for second order locally periodic elliptic operators, that are closely related to the present paper, were studied in \cite{ChPaPi-13, PaPe-2014}.
Dependence of the problem on a slow variable (in the coefficients or in the geometry) together with the presence of a large parameter in the equation give rise to the effect of localization of eigenfunctions. These results correspond to the so-called subcritical case, when eigenfunctions can be approximated by scaled exponentially decaying functions (eigenfunctions of a harmonic oscillator operator).  

A second order locally periodic elliptic operator with large potential was studied in \cite{AlPi-2002}.  
Homogenization of periodic elliptic systems with large potential was treated in \cite{AlCaPiSiVa-04}. In both cases, under a generic assumption on the ground state of an auxiliary cell problem, it was proved that the solution can be approximately factorized as the product of a fast oscillating cell eigenfunction and of a slowly varying solution of a scalar second-order equation. These two cases correspond to the so-called critical case.

There is a vast literature devoted to the homogenization of elliptic systems and higher order elliptic equations in domains with microstructure.For the homogenization of linear elliptic systems in the we refer to \cite{BLP-78, ZhOl-82, OlYoSh, BaPa-89}.
Homogenization of boundary value problems for higher order equations in domains with fine-grained  boundary were studied in \cite{Kh-72, Kh-77, MaKh-74, OlShap-96}. Homogenization of linear higher order equations in perforated domains were studied in  \cite{BeChu-83, Svish-92, Chao-97}; nonlinear higher order equations in perforated domains were considered in \cite{Koval-97, DalMasoSkrypnik-99}.
In \cite{Melnik-92} a spectral asymtotitics for a fourth order elliptic operator with rapidly oscillating coefficients was obtained. A spectral asymptotics for a biharmonis operator in a domain with a deeply indented boundary was constructed in \cite{KoNa-10}.

We turn to the formulation of the problem.
Let $\Omega$ be a bounded domain in $\mathbf{R}^d$ with Lipschitz boundary.
We consider the following Dirichlet spectral problem for a fourth order self-adjoint uniformly elliptic operator:
\begin{align}\label{eq:orig-prob}
\begin{cases}\displaystyle
\partial_{ij}\big(a_{ijkl}^\ve\partial_{kl} u^\ve\big)
- \frac{1}{\ve^{\alpha}}\partial_i \big(b_{ij}^\ve\partial_j u^\ve\big)
+ \frac{1}{\ve^\beta} c^\ve u^\ve = \lambda^\ve u^\ve, & x \in \Omega,\\
u^\ve = \nabla u^\ve \cdot n = 0, & x \in \partial \Omega,
\end{cases}
\end{align}
where $n$ denotes the exterior unit normal to $\partial \Omega$.
We use summation convention over repeated indices and use ''$\cdot$`` for the standard scalar product in $\mathbf R^d$;
$\ve >0$ is a small parameter; $\alpha, \beta$ are positive parameters.


Our main assumptions are:
\begin{enumerate}[(H1)]
\item The coefficients are real and of the form $a_{ijkl}^\ve(x) = a_{ijkl}\big(x, \frac{x}{\ve}\big)$, $b_{ij}^\ve(x) = b_{ij}\big(x, \frac{x}{\ve}\big)$ and $c^\ve(x) = c(x, \frac{x}{\ve})$,
where the functions $a_{ijkl}(x,y), b_{ij}(x, y)\in C(\overline{\Omega}; L^\infty({\ttt^d}))$, $c(x, y)\in C^{2}(\overline{\Omega}; L^\infty({\ttt^d}))$ are periodic in $y$; $\ttt^d$ is the unit torus.
We denote $|a_{ijkl}|, |b_{ij}| \le \Lambda^{-1}$, $\Lambda>0$.

\item
Symmetry condition: $a_{ijkl}=a_{klij}$, $b_{ij}=b_{ji}$.

\item The coefficients $a_{ijkl}(x, y)$ satisfy the uniform ellipticity condition in $\Omega \times \ttt^d$: there is $\Lambda > 0$ such that, almost everywhere,
\begin{align*}
a_{ijkl}(x, y) \xi_{ij}\xi_{kl}  \ge \Lambda |\xi|^2,\quad \forall \xi \in \mathbf R^{d\times d}.
\end{align*}

\item The function $c(x, y)$ is assumed to be strictly positive almost everywhere in $\Omega\times \ttt^d$, and its local average
\begin{align*}
\overline{c}(x) = \int_{\ttt^d} c(x, y)\, dy
\end{align*}
has a unique global minimum at $x=0 \in \Omega$, with a non-degenerate Hessian $H= \nabla \nabla \bar{c}(0)$:
\begin{align*}
\overline{c}(x) = \overline c(0) + \frac{1}{2}Hx\cdot x + o(|x|^2).
\end{align*}

\item The coefficients $b_{ij}(x,y)$ satisfy the uniform ellipticity condition in $\Omega \times \ttt^d$:
there is $\Lambda' > 0$ such that, almost everywhere,
\begin{align*}
b_{ij}(x,y)\xi_i \xi_j & \ge \Lambda' |\xi|^2, \quad \forall \xi \in \rr^d.
\end{align*}
\end{enumerate}


For domains $\mathcal{O}$ in $\rr^d$ we denote the $L^2(\mathcal{O})$-norm of $u$ by $\|u\|_{2, \mathcal{O}} = \int_{\mathcal{O}} |u|^2\, dx$, and for bounded Lipschitz domains $\Omega$,
\begin{align*}
H_0^2(\Omega) = \{v \in H^2(\Omega): \, v = \nabla v\cdot n = 0 \,\, \mbox{on} \,\, \partial \Omega\}.
\end{align*}

We consider the following bilinear form corresponding to \eqref{eq:orig-prob}
\begin{align}
\label{def:bilinear-form-original}
A_\ve(u, v) = \int_\Omega a_{ijkl}^\ve \partial_{ij} u \, \partial_{kl} v \, dx +
\frac{1}{\ve^{\alpha}} \int_\Omega b_{ij}^\ve \partial_{i} u \, \partial_{j} v \, dx
+ \frac{1}{\ve^\beta}\int_\Omega  c^\ve u  v \, dx.
\end{align}
The weak form of \eqref{eq:orig-prob} reads: Find $\lambda^\ve \in \mathbb C$ and nonzero $u^\ve \in H_0^2(\Omega)$ such that
\begin{align}
\label{eq:weak-orig-prob}
A_\ve(u^\ve, v) = \lambda^\ve \int_\Omega u^\ve v\, dx,
\end{align}
for all $v \in H_0^2(\Omega)$.

By the Riesz-Schauder, Hilbert-Schmidt theorems and the minmax principle (\cite{CouHi-53, reed1}), for each $\ve$ small enough, we have the following classical result.
\begin{lemma}
\label{lm:structure-spectrum-orig-prob}
Suppose that (H1)--(H4) are satisfied.
Then for all sufficiently small $\ve > 0$,
the eigenvalues $\lambda^\ve_i$ of \eqref{eq:orig-prob} are real and such that
\begin{align*}
0 < \lambda_1^\ve \le \lambda_2^\ve \le \cdots ,  \quad \lambda_i^\ve \to \infty \text{ as }  i & \to \infty,
\end{align*}
where each eigenvalue is counted as many times as its multiplicity.
The eigenfunctions $u_i^\ve$ form an orthonormal basis in $L^2(\Omega)$.
All eigenvalues are of finite multiplicity and are characterized by the variational principle:
\begin{align*}
\lambda_i^\ve & = \min \frac{A_\ve( v, v )}{\int_\Omega v^2 \,dx},
\end{align*}
where the minimum is taken over all nonzero functions $v$ in $H^2_0(\Omega)$ that are orthogonal in $L^2(\Omega)$
to the first $i-1$ eigenfunctions $u^\ve_1, \ldots, u^\ve_{i-1}$.
\end{lemma}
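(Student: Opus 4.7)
The plan is to recognise the lemma as a standard consequence of the spectral theory of compact self-adjoint operators, and reduce it to verifying that the bilinear form $A_\ve$ defines, for each fixed sufficiently small $\ve$, a continuous, symmetric, coercive form on $H^2_0(\Omega)$. Once this is established, all conclusions follow from the Lax-Milgram theorem, Rellich's compactness theorem, and the classical Hilbert-Schmidt and Courant-Fischer theorems quoted in \cite{CouHi-53, reed1}.

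First I would check the three basic properties of $A_\ve$ on $H^2_0(\Omega)$ at fixed $\ve$. Symmetry is immediate from (H2). Continuity follows from the $L^\infty$ bounds on $a_{ijkl}, b_{ij}, c$ supplied by (H1); the constants will of course depend on $\ve$ through the powers of $\ve^{-\alpha}$ and $\ve^{-\beta}$, but this is irrelevant at fixed $\ve$. For coercivity I would discard the two nonnegative lower-order contributions (nonnegativity of the $b$-term comes from (H5), of the $c$-term from (H4)) and use the fourth-order ellipticity (H3) to get
\begin{equation*}
A_\ve(u,u) \;\ge\; \Lambda \int_\Omega |D^2 u|^2\, dx.
\end{equation*}
Because $u \in H^2_0(\Omega)$, two applications of the Poincaré inequality give $\|u\|_{H^2(\Omega)} \le C(\Omega)\|D^2 u\|_{L^2(\Omega)}$, so $A_\ve(\cdot, \cdot)$ is coercive on $H^2_0(\Omega)$ with an $\ve$-dependent but strictly positive constant.

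Next I would define the solution operator $T_\ve: L^2(\Omega) \to H^2_0(\Omega)$ by $A_\ve(T_\ve f, v) = \int_\Omega f\,v\,dx$ for all $v \in H^2_0(\Omega)$, which is well defined and continuous by Lax-Milgram. Composing with the compact Rellich embedding $H^2_0(\Omega) \hookrightarrow L^2(\Omega)$ yields a compact operator $T_\ve \in \mathcal{L}(L^2(\Omega))$. Symmetry of $A_\ve$ makes $T_\ve$ self-adjoint on $L^2(\Omega)$, and testing against $v = T_\ve f$ together with coercivity shows $T_\ve$ is strictly positive. The Riesz-Schauder and Hilbert-Schmidt theorems then give a nonincreasing sequence of strictly positive eigenvalues $\mu_i^\ve \to 0$, each of finite multiplicity, whose eigenfunctions $u_i^\ve$ form an orthonormal basis of $L^2(\Omega)$. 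Setting $\lambda_i^\ve = 1/\mu_i^\ve$ recovers the eigenvalues of \eqref{eq:orig-prob} in the claimed order with $\lambda_i^\ve \to \infty$; the weak formulation \eqref{eq:weak-orig-prob} shows these are exactly the eigenvalues of the original problem, and they are positive.

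Finally, for the variational characterisation I would apply the Courant-Fischer min-max principle to the compact self-adjoint positive operator $T_\ve$, which translates (via $\lambda = 1/\mu$) into the stated minimisation for $\lambda_i^\ve$ over $H^2_0(\Omega)$-functions orthogonal in $L^2(\Omega)$ to the first $i-1$ eigenfunctions. The only step requiring any thought is the coercivity bound; apart from that, the proof is a bookkeeping exercise assembling classical ingredients, so I do not anticipate a genuine obstacle here.
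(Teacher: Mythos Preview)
Your approach is exactly what the paper has in mind: the paper does not spell out a proof but simply cites the Riesz--Schauder, Hilbert--Schmidt and min--max theorems, and your outline is the standard way to assemble these ingredients.

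There is one slip: you invoke (H5) to discard the $b$-term as nonnegative, but the lemma only assumes (H1)--(H4), under which $b$ need not be sign-definite. The cure is to absorb the second-order term by interpolation rather than by sign. For $u\in H^2_0(\Omega)$ one has $\|\nabla u\|_{2,\Omega}^2\le \|u\|_{2,\Omega}\|\Delta u\|_{2,\Omega}$ (this is the estimate \eqref{eq:ax-6} the paper uses later), so by Young's inequality
\[
\ve^{-\alpha}\Big|\int_\Omega b_{ij}^\ve\,\partial_i u\,\partial_j u\,dx\Big|\le \tfrac{\Lambda}{2}\|\nabla\nabla u\|_{2,\Omega}^2+\frac{C}{\ve^{2\alpha}}\|u\|_{2,\Omega}^2 .
\]
This already gives a G{\aa}rding bound $A_\ve(u,u)\ge\tfrac{\Lambda}{2}\|\nabla\nabla u\|_{2,\Omega}^2-C_\ve\|u\|_{2,\Omega}^2$, which is all you need for Lax--Milgram after a spectral shift; the remainder of your argument (compactness of $T_\ve$, self-adjointness, Courant--Fischer) then goes through unchanged. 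For the strict positivity $\lambda_1^\ve>0$, the negative term $C\ve^{-2\alpha}\|u\|^2$ is absorbed by the strictly positive zero-order contribution $c_{\min}\ve^{-\beta}\|u\|^2$ coming from (H4) once $\ve$ is small, in the parameter ranges where only (H1)--(H4) are assumed in the paper (there $\beta>2\alpha$); in the remaining ranges (H5) is in force anyway.
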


The goal of this paper is to describe the asymptotic behavior of the eigenpairs $(\lambda_k^\ve, u_k^\ve)$, as $\ve \to 0$.
We restrict ourselves to the values of the parameters $\alpha\ge 0, \beta > 0$ (singular perturbation) such that $\alpha < \beta$ (concentration effect is observed)  and $\beta < 4$ (subcritical case).
The result is presented in the three theorems \ref{th:main}, \ref{th:case R_1}, \ref{th:case R_3-R_5}.

The case $\alpha=\beta$ is classical, and the standard two-scale convergence can be applied to describe the asymptotics. Depending on the value of $\alpha$ one gets either fourth order limit operator without second order terms ($0<\alpha=\beta<2$), or fourth order with second order terms ($\alpha=\beta=0$, $\alpha=\beta=2$), or just a second order limit operator ($\alpha=\beta>2$).

The case $\beta=4, \alpha<4$ is the critical case, when the oscillations of the eigenfunctions are expected to be of order $\ve$. As it is seen from \cite{AlCaPiSiVa-04}, the technique to be used is different, and this case is to be considered elsewhere. In addition, the values of $\alpha, \beta$ such that $3\le \beta< 4$ and $\alpha < \beta-2$ are not covered by the present paper (the hatched region in Figure~\ref{fig:alphabeta}), because the error coming from Lemma~\ref{lm:mvt} while passing to the limit does not vanish, as $\ve \to 0$. Another argument is to be applied, and this case will be considered elsewhere.

To describe the asymptotic behavior of eigenpairs $(\lambda_k^\ve, u_k^\ve)$ as $\ve \to 0$, we divide the domain for the parameters
$(\alpha, \beta)$ into the following regions (see Figure \ref{fig:alphabeta}):
\begin{align*}
R_1 & = \{ (\alpha,\beta) : 0 \le \alpha < 1, \,\, 3\alpha < \beta <3\}, \\
R_2 & = \{ (\alpha, \beta) : 0 < \alpha < 1, \,\, \beta=3\alpha\}, \\
R_3 & = \{ (\alpha, \beta) : 0 < \alpha < 2, \,\, \alpha<\beta<3\alpha, \,\, \beta < \alpha + 2\}, \\
R_4 & = \{ (\alpha, \beta) : \alpha=2, \,\, 2 < \beta < 4\}, \\
R_5 & = \{ (\alpha, \beta) : 2 < \alpha < 4, \,\, \alpha<\beta<4 \}.
\end{align*}
The reason for distinguishing these regions is that we get different asymptotics in each case.
In short, in $R_1$ we get a fourth order equation in the limit without second order terms;
in $R_2$ the limit equation contains both fourth and second order terms;
in $R_3, R_4, R_5$ the limit equations are of the second order.
We choose to consider in details one case $\beta=3\alpha=1$, corresponding to region $R_2$, since all the terms contribute in the limit (see Theorem~\ref{th:main}).
The spectral asymptotics in the other cases are described in Sections~\ref{sec:R_1}, \ref{sec:R_3-R_5}.

\begin{figure}[hb]
\begin{center}
\includegraphics[width=.45\textwidth]{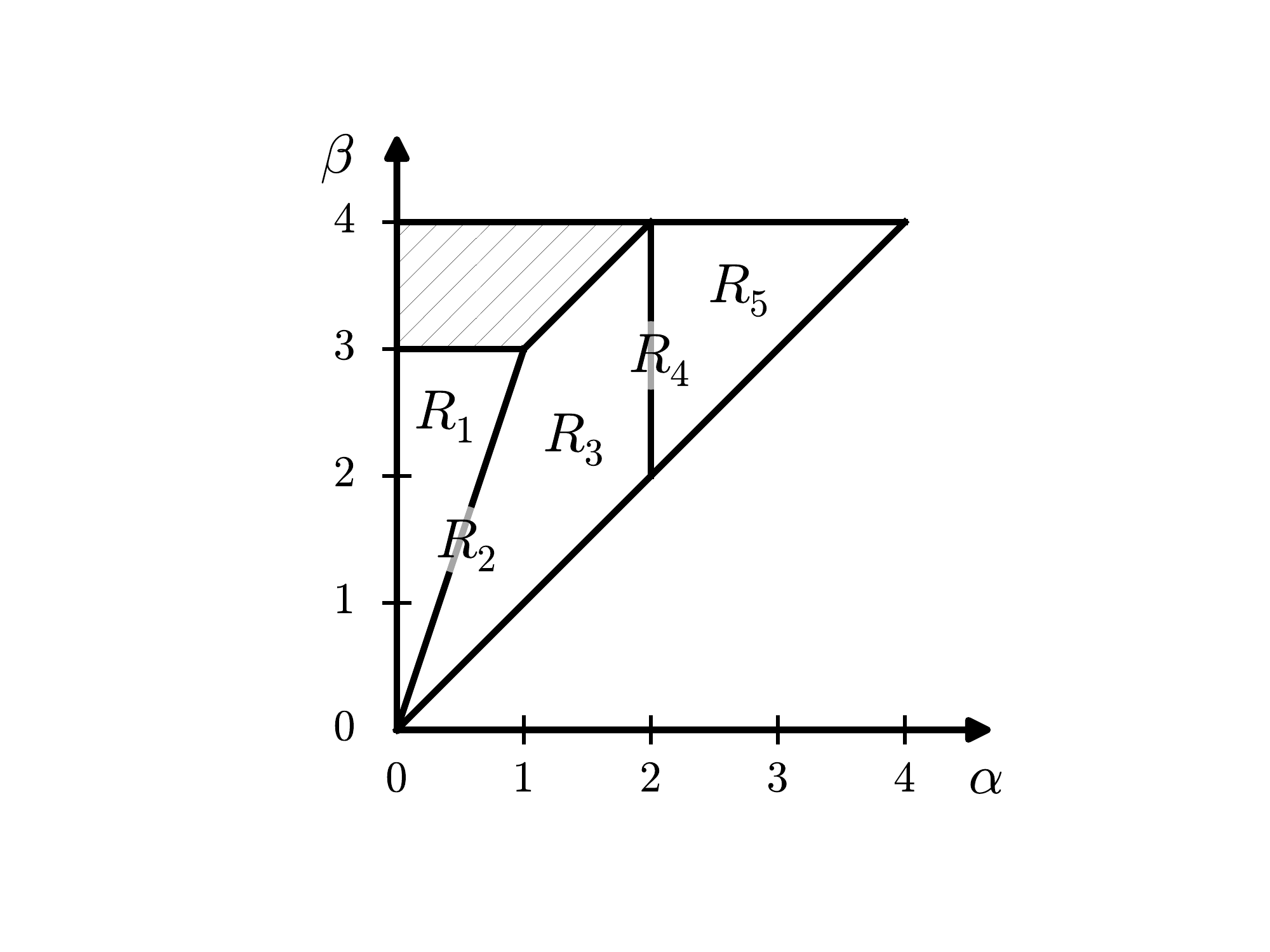}
\end{center}
\caption{The partition of the parameter region for $(\alpha, \beta)$.}
\label{fig:alphabeta}
\end{figure}

\section{A model problem: the case $(\alpha, \beta) \in R_2$}
\label{sec:modelproblem}

The result of this section is contained in the following theorem.
\begin{theorem}
\label{th:main}
Let $(\alpha, \beta)\in R_2$ and let $(\lambda_k^\ve, u_k^\ve)$ be the $k$th eigenpair of \eqref{eq:orig-prob} normalized by $\|u_k^\ve\|_{2, \Omega}^2=\ve^{d\alpha/2}$. Suppose that the conditions (H1)--(H4) are satisfied.
Then we have the following representation:
\begin{align*}
\lambda_k^\ve & = \frac{\bar c(0)}{\ve^{3\alpha}} + \frac{\eta_k^\ve}{\ve^{2\alpha}}, &
u_k^\ve(x) & = v_k^\ve\Big(\frac{x}{\ve^{\alpha/2}}\Big),
\end{align*}
where $(\eta_k^\ve, v_k^\ve)$ are such that as $\ve \to 0$,
\begin{enumerate}[(i)]
\item $\eta_k^\ve \to \eta_k$,
\item up to a subsequence, $v_k^\ve$ converges to $v_k$ weakly in $H^2(\rr^d)$ and strongly in $L^2(\rr^d)$,
\end{enumerate}
where $\eta_k$ is the $k$th eigenvalue, and $v_k$ is an eigenfunction corresponding to $\eta_k$ normalized by $\| v_k \|_{2,\rr^d} = 1$, of the uniformly elliptic effective problem
\begin{align*}
\partial_{ij}\big(a_{ijkl}^\eff \partial_{kl} v\big)
- \partial_i \big(\bar b_{ij}(0)\partial_j v\big)
+ \frac{1}{2} (H z\cdot z)\, v = \eta v, \quad z \in \rr^d, 
\end{align*}
with $a_{ijkl}^\eff$ defined by \eqref{def:eff coefficients}, $\bar b_{ij}(0)=\int_{\ttt^d} b_{ij}(0, y)dy$, and $H = \nabla\nabla \bar c(0)$.
\end{theorem}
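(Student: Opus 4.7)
The proof proceeds by rescaling to absorb the singular scales, establishing compactness via a confining-potential bound, and passing to the limit using two-scale convergence combined with a Taylor expansion of $\bar c$ at its minimum.

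First, set $z = \ve^{-\alpha/2} x$ and $v^\ve(z) = u^\ve(\ve^{\alpha/2} z)$, so that the normalization $\|u_k^\ve\|_{2,\Omega}^2 = \ve^{d\alpha/2}$ becomes $\|v^\ve\|_{2,\Omega_\ve} = 1$ on the expanding domain $\Omega_\ve := \ve^{-\alpha/2}\Omega$. Each $x$-derivative produces a factor $\ve^{-\alpha/2}$, so the fourth-order term and the second-order term (with its prefactor $\ve^{-\alpha}$) both contribute at order $\ve^{-2\alpha}$, while the potential contributes at order $\ve^{-3\alpha}$. Substituting $\lambda^\ve = \bar c(0)\ve^{-3\alpha} + \eta^\ve\ve^{-2\alpha}$ and dividing by $\ve^{-2\alpha}$ yields an eigenvalue problem on $\Omega_\ve$ with eigenvalue $\eta^\ve$ whose potential term reads $\ve^{-\alpha}\bigl(c(\ve^{\alpha/2}z,\,\ve^{\alpha/2-1}z) - \bar c(0)\bigr)\, v^\ve$. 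Because $\alpha<1$, the fast variable $y = \ve^{\alpha/2-1}z$ oscillates faster than $z$.

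Next, I decompose $c - \bar c(0) = (c(x,y) - \bar c(x)) + (\bar c(x) - \bar c(0))$. By (H4) and Taylor expansion, the second summand divided by $\ve^\alpha$ converges to $\tfrac12 H z\cdot z$; this produces the harmonic confining potential, and it is exactly the relation $\beta = 3\alpha$ that makes this balance work. The mean-zero oscillating part is handled via a periodic corrector $\theta(x,y)$ with $-\Delta_y \theta(x,y) = c(x,y) - \bar c(x)$: integrating by parts against test functions and counting powers of $\ve$ against the intermediate oscillation scale $\ve^{1-\alpha/2}$ in $z$, one sees that the contribution is of order $\ve^{2-2\alpha}$, which vanishes precisely because $\alpha<1$ (equivalently $\beta<3$); this is where Lemma~\ref{lm:mvt} is invoked. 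Simultaneously, the fourth-order operator is homogenized by the classical two-scale ansatz $v^\ve \sim v + \ve N_{kl}(x, x/\ve)\partial_{kl} v + \cdots$, producing the effective tensor $a^{\eff}_{ijkl}$, while the second-order coefficient averages to $\bar b_{ij}(x)$ and is frozen at $x=0$ since the mass of $v^\ve$ concentrates there.

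For compactness, test the rescaled bilinear form with $v^\ve$ itself. Ellipticity of $a$ and $b$, positivity of $c$, and the extracted harmonic potential yield, for bounded $\eta^\ve$, a uniform estimate of the form $\|v^\ve\|_{H^2(\Omega_\ve)}^2 + \int_{\Omega_\ve}(1+|z|^2)|v^\ve|^2\,dz \le C$. Extending by zero to $\rr^d$, the weighted $L^2$ bound provides tightness and hence strong $L^2(\rr^d)$ compactness, while the $H^2$ bound gives weak $H^2(\rr^d)$ compactness. Upper bounds on $\eta_k^\ve$ follow from the min-max principle with trial functions obtained by truncating and transplanting the first $k$ eigenfunctions of the effective problem to $\Omega_\ve$; combining these with lower bounds and passing to the limit in the weak formulation against any $\varphi \in C^\infty_c(\rr^d)$ identifies $v_k$ as a normalized eigenfunction of the effective operator with eigenvalue $\eta_k = \lim \eta_k^\ve$, ordered correctly because the strong $L^2(\rr^d)$-convergence preserves orthogonality.

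The main obstacle is the simultaneous handling of the fourth-order two-scale homogenization with the Taylor expansion of $\bar c$ at $0$ and the neutralization of the fast oscillations of $c$ by a corrector that must not disturb the leading-order balance at scales $\ve^{\alpha/2}$ in $x$ and $\ve$ in $y$; matching these scales, and controlling the many lower-order terms generated by the chain rule on the corrector and on $N_{kl}$, is the technical heart of the argument, and it is exactly the remainder estimate from Lemma~\ref{lm:mvt} that forces the restriction $\beta=3\alpha<3$.
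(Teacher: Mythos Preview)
Your proposal captures all the essential analytic ingredients of the paper's proof: the rescaling $z=\ve^{-\alpha/2}x$, the splitting $c-\bar c(0)=(c-\bar c)+(\bar c-\bar c(0))$ with Lemma~\ref{lm:mvt} killing the mean-zero oscillation and Taylor producing $\tfrac12 Hz\cdot z$, two-scale convergence for the fourth-order part yielding $a^{\eff}$, and compactness from the combined $H^2$ and weighted-$L^2$ control. Where you depart from the paper is in the \emph{organization} of the spectral convergence. The paper does not work directly with the eigenvalue equation: it introduces a shift $\mu$ and studies the Green operators $G_\ve,\,G$ of the associated boundary value problems, proves that $G_\ve\to G$ in the uniform operator norm on $L^2(\rr^d)$ (Lemmas~\ref{lm:pass to the limit}--\ref{lm:greenconvergence}), and then invokes the abstract spectral-convergence result Lemma~\ref{lm:uniform convergence}. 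Your route instead bounds $\eta_k^\ve$ from above by min--max with transplanted effective eigenfunctions, extracts compactness for $v_k^\ve$, passes to the limit in the weak formulation, and recovers the correct labelling of eigenvalues from preserved $L^2$-orthogonality plus a two-sided min--max comparison.

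Both strategies are valid. The resolvent route buys you a clean, modular endgame: once uniform operator convergence is established, all statements about eigenvalues and eigenfunctions (including multiplicities and the ``up to a subsequence'' clause) come for free from one abstract lemma, and the a~priori bounds need only be proved for solutions of \eqref{eq:Green operator rescaled} with bounded right-hand side. Your direct route avoids introducing $G_\ve$ but requires more bookkeeping: a diagonal extraction so that $v_1^\ve,\dots,v_k^\ve$ converge along the \emph{same} subsequence, and an explicit argument that the limiting orthonormal family realizes exactly $\eta_1,\dots,\eta_k$ (your orthogonality remark gives $\liminf\eta_k^\ve\ge\eta_k$, to be matched with the min--max upper bound). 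Two small corrections: ellipticity of $b$ is neither assumed (only (H1)--(H4) here) nor needed---the second-order term is controlled via $\|\nabla V\|_2^2\le\|V\|_2\|\Delta V\|_2$ and absorbed into the fourth-order coercivity; and the fast variable after rescaling oscillates at scale $\ve^{1-\alpha/2}$, not $\ve$, so the two-scale corrector expansion should read $v^\ve\sim v+\ve^{2(1-\alpha/2)}N_{kl}(z/\ve^{1-\alpha/2})\partial_{kl}v$ with $N_{kl}$ frozen at $x=0$ as in~\eqref{eq:N}.
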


The proof of Theorem~\ref{th:main} will occupy the rest of this section and is given for the case $\beta=3\alpha=1$.
The argument used is the same for the other values of $(\alpha, \beta)\in R_2$.

\subsection{Estimates for eigenvalues of the original problem}
\label{Sec:estimates eigenvalues}

To motivate the change of variables we will make in the next subsection we prove
the following a priori estimates for the eigenvalues and the eigenfunctions of problem \eqref{eq:orig-prob}.

\begin{lemma}\label{lm:est-lambda^eps}
Suppose that (H1)--(H4) are satisfied.
Let $(\lambda_i^\ve, u_i^\ve)$ be the $i$th eigenpair of \eqref{eq:orig-prob} with $\beta=3\alpha=1$, normalized by $\| u_i^\ve \|_{2,\Omega} = 1$.
Then there exist positive constants $C_1, C_2(i)$ such that for all sufficiently small $\ve > 0$,
\begin{align*}
-\frac{C_1}{\ve^{2/3}} & \le \lambda_i^\ve - \frac{\bar c(0)}{\ve} \le \frac{C_2(i)}{\ve^{2/3}}, &
\| \Delta u_i^\ve\|_{2, \Omega} & \le \frac{C}{\ve^{1/3}}.
\end{align*}
\end{lemma}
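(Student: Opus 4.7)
The plan is to prove the upper bound on $\lambda_i^\ve$ by the min-max characterisation in Lemma~\ref{lm:structure-spectrum-orig-prob} with hand-picked test functions, prove the matching lower bound by a G\aa rding-type coercivity estimate for $A_\ve$, and then read off the $H^2$-bound on $u_i^\ve$ by specialising the coercivity argument to the eigenfunction itself. The length scale of the test functions is suggested by balancing the $a$- and $c^\ve$-terms in $A_\ve$: for $\beta = 3\alpha = 1$ they meet at $|x|\sim\ve^{\alpha/2}=\ve^{1/6}$, which is the scale at which eigenfunctions are expected to concentrate (compare Theorem~\ref{th:main}).

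For the upper bound I would fix $\psi_1,\dots,\psi_i \in C_c^\infty(\rr^d)$ with pairwise disjoint supports and put $w_k^\ve(x) = \ve^{-d/12}\psi_k(\ve^{-1/6} x)$. For $\ve$ small these lie in $H_0^2(\Omega)$, are pairwise $A_\ve$-orthogonal (derivatives of $w_j^\ve$ and $w_k^\ve$ inherit the disjoint supports), and satisfy $\|w_k^\ve\|_{2,\Omega} = \|\psi_k\|_{2,\rr^d}$. A change of variables $y = \ve^{-1/6} x$ turns the $a$- and $b$-terms of $A_\ve(w_k^\ve,w_k^\ve)$ into $O(\ve^{-2/3})$ quantities, controlled by $\|D^2\psi_k\|_{2,\rr^d}^2$ and $\|\nabla\psi_k\|_{2,\rr^d}^2$; the $c$-term becomes $\ve^{-1}\!\int c(\ve^{1/6} y, \ve^{-5/6} y)\psi_k(y)^2\,dy$, and expanding $\bar c(\ve^{1/6} y) = \bar c(0) + O(\ve^{1/3}|y|^2)$ via (H4) together with the oscillation control for $c - \bar c$ sketched below, this equals $\bar c(0)/\ve\cdot\|\psi_k\|_{2,\rr^d}^2 + O(\ve^{-2/3})$. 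The min-max principle applied to $V_i = \mathrm{span}\{w_1^\ve,\dots,w_i^\ve\}$ then yields $\lambda_i^\ve \le \bar c(0)/\ve + C_2(i)/\ve^{2/3}$.

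For the lower bound I would prove the stronger G\aa rding coercivity
\[
A_\ve(v, v) \ge \Big(\frac{\bar c(0)}{\ve} - \frac{C_1}{\ve^{2/3}}\Big)\,\|v\|_{2,\Omega}^2, \qquad v \in H_0^2(\Omega),
\]
and specialise to $v = u_i^\ve$. Decompose $c^\ve - \bar c(0) = (c^\ve - \bar c(x)) + (\bar c(x) - \bar c(0))$: by (H4) the second summand contributes non-negatively. For the oscillatory first summand, let $\Phi(x, y)$ be the periodic-in-$y$, mean-zero solution of $\Delta_y\Phi(x, y) = c(x, y) - \bar c(x)$ on $\ttt^d$, whose derivatives $\nabla_y\Phi,\,\partial_{x_i y_i}\Phi$ lie in $L^\infty(\Omega\times\ttt^d)$ by (H1) and Calder\'on--Zygmund. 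The chain-rule identity
\[
c^\ve(x) - \bar c(x) = \mathrm{div}_x\!\big[\ve\,(\nabla_y\Phi)(x, x/\ve)\big] - \ve\,(\partial_{x_i y_i}\Phi)(x, x/\ve),
\]
an integration by parts (legitimate because $v^2$ and $\nabla(v^2) = 2v\nabla v$ vanish on $\partial\Omega$ for $v \in H_0^2(\Omega)$), and Young's inequality with parameter of order $\ve^{1/3}$ applied to the resulting $\|v\|_{2,\Omega}\,\|\nabla v\|_{2,\Omega}$ product give
\[
\frac{1}{\ve}\Big|\!\int_\Omega (c^\ve - \bar c)\,v^2\,dx\Big| \le \frac{\Lambda'}{2\ve^{1/3}}\,\|\nabla v\|_{2,\Omega}^2 + C\,\|v\|_{2,\Omega}^2.
\]
The gradient part is absorbed by half of the coercive bound $\frac{\Lambda'}{\ve^{1/3}}\|\nabla v\|_{2,\Omega}^2 \le \frac{1}{\ve^{1/3}}\!\int b^\ve|\nabla v|^2\,dx$ supplied by (H5), and $C\|v\|_{2,\Omega}^2 \le C_1\ve^{-2/3}\|v\|_{2,\Omega}^2$ for small $\ve$ completes the coercivity. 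The $H^2$-bound then follows by applying the same manipulation to the identity $A_\ve(u_i^\ve, u_i^\ve) = \lambda_i^\ve$: subtracting $\bar c(0)/\ve$ from both sides, the right-hand side is $\le C/\ve^{2/3}$ by the upper bound just proved, the $(\bar c - \bar c(0))$-integral is non-negative, and after absorbing the gradient by-product into the $b$-integral one is left with $\Lambda\|D^2 u_i^\ve\|_{2,\Omega}^2 \le C/\ve^{2/3}$ via (H3), hence $\|\Delta u_i^\ve\|_{2,\Omega} \le \sqrt d\,\|D^2 u_i^\ve\|_{2,\Omega} \le C/\ve^{1/3}$.

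The main obstacle is controlling the rapidly oscillating mean-zero term $c^\ve - \bar c$: the corrector trick produces the factor $\ve$ needed to make $\frac{1}{\ve}\int (c^\ve - \bar c)\,v^2\,dx$ a lower-order perturbation, but Young's inequality must simultaneously be tuned so that the by-product gradient term is absorbable into $\frac{1}{\ve^{1/3}}\!\int b^\ve|\nabla v|^2\,dx$; this double requirement is exactly what pins down the scaling $\beta = 3\alpha$.
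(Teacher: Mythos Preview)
Your overall strategy---min--max with concentrated test functions for the upper bound, a G\aa rding-type inequality for the lower bound, and then specialising to the eigenfunction for the $H^2$ estimate---is sound, and your use of $i$ test functions with pairwise disjoint supports is a clean alternative to the paper's device of projecting a single test function onto the orthogonal complement of the first $i-1$ eigenfunctions.

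There is, however, a genuine gap in the lower bound. You absorb the gradient by-product coming from the corrector identity into the term $\frac{1}{\ve^{1/3}}\int_\Omega b_{ij}^\ve\partial_i v\,\partial_j v\,dx$ by invoking the coercivity bound $\Lambda'|\xi|^2\le b_{ij}\xi_i\xi_j$. That is hypothesis~(H5), but the lemma only assumes (H1)--(H4): the matrix $b$ is merely bounded and symmetric, and $\int_\Omega b^\ve_{ij}\partial_i v\,\partial_j v\,dx$ may well be negative. So the absorption step, and hence your G\aa rding inequality and the $H^2$ bound derived from it, are unjustified as stated.

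The fix is to absorb all gradient terms into the fourth-order part instead of into the $b$-term. For $v\in H_0^2(\Omega)$ one has the elementary interpolation
\[
\|\nabla v\|_{2,\Omega}^2 = -\int_\Omega v\,\Delta v\,dx \le \|v\|_{2,\Omega}\,\|\Delta v\|_{2,\Omega},
\]
so that
\[
\frac{1}{\ve^{1/3}}\Big|\int_\Omega b_{ij}^\ve\partial_i v\,\partial_j v\,dx\Big|
\le \frac{\Lambda^{-1}}{\ve^{1/3}}\|v\|_{2,\Omega}\|\Delta v\|_{2,\Omega}
\le \gamma\|\Delta v\|_{2,\Omega}^2 + \frac{C_\gamma}{\ve^{2/3}}\|v\|_{2,\Omega}^2,
\]
and the first piece is swallowed by $\Lambda\|\nabla\nabla v\|_{2,\Omega}^2$ coming from (H3) for $\gamma$ small. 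The oscillatory term $\frac{1}{\ve}\int_\Omega(c^\ve-\bar c)v^2\,dx$ is handled the same way (either via your single-derivative corrector or, as the paper does in Lemma~\ref{lm:mvt}, via a second-order corrector yielding a factor $\ve^2$), and the rest of your argument goes through unchanged. This is precisely how the paper proceeds; note that your closing remark about (H5) being ``what pins down the scaling $\beta=3\alpha$'' is therefore not quite right---the scaling is dictated by the balance between the $a$-term and the quadratic expansion of $\bar c$, with no input from $b$.
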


To prove Lemma \ref{lm:est-lambda^eps} we will use the following estimate for integrals of oscillating functions.
\begin{lemma}\label{lm:mvt}
Let $g(x,y) \in C^2(\overline{\Omega}; L^\infty(\ttt^d))$ be such that $\int_{\ttt^d} g(x,y)\,dy = 0$ for all $x \in \overline{\Omega}$.
Then there exists a positive constant $C$ such that
\begin{align*}
\Big|  \int_\Omega g\big(x,\frac{x}{\ve}\big) u v \,dx \Big| \le C \ve^2 ( \| \Delta u \|_{2, \Omega} \| v \|_{2, \Omega} + \| u \|_{2, \Omega} \| \Delta v \|_{2, \Omega}),
\end{align*}
for all $u, v \in H^2_0(\Omega)$.
\end{lemma}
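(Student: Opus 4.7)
The plan is to realize $g(x,x/\ve)$ as an $O(\ve^2)$ combination of second $x$-derivatives of smooth periodic correctors and then integrate by parts twice against $uv$, using that $u,v\in H^2_0(\Omega)$. Since $\int_{\ttt^d}g(x,y)\,dy=0$ for every $x\in\overline\Omega$, the Fredholm alternative on the torus produces a unique zero-mean solution $N(x,\cdot)$ of the cell problem $\Delta_y N(x,y)=g(x,y)$, and the hypothesis $g\in C^2(\overline\Omega;L^\infty(\ttt^d))$ together with elliptic regularity on $\ttt^d$ yields uniform bounds on $N$, $\nabla_x N$, $\Delta_x N$ and $\nabla_y N$ over $\overline\Omega\times\ttt^d$.

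Writing $f^\ve(x):=f(x,x/\ve)$, the chain rule applied to $\ve^2 N^\ve$ gives
\begin{align*}
\ve^2\Delta N^\ve \;=\; g^\ve + \ve^2(\Delta_x N)^\ve + 2\ve\sum_i(\partial_{x_i y_i}N)^\ve,
\end{align*}
and an analogous computation on the smooth vector field $(\nabla_x N)^\ve$ yields $\sum_i\partial_i[(\partial_{x_i}N)^\ve]=(\Delta_x N)^\ve+\ve^{-1}\sum_i(\partial_{x_i y_i}N)^\ve$. Eliminating the stray $O(\ve)$ mixed-derivative term between these two identities produces the key decomposition
\begin{align*}
g(x,x/\ve) \;=\; \ve^2\Big[\Delta N^\ve - 2\sum_i\partial_i\big[(\partial_{x_i}N)^\ve\big] + (\Delta_x N)^\ve\Big].
\end{align*}
Multiplying by $uv$, integrating over $\Omega$, and performing two integrations by parts in the $\Delta N^\ve$ term and one in the divergence term, all boundary integrals vanish because $u,v\in H^2_0(\Omega)$ gives $uv=0$ and $\partial_n(uv)=v\partial_n u+u\partial_n v=0$ on $\partial\Omega$. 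Expanding $\nabla(uv)$ and $\Delta(uv)$ by the Leibniz rule and using the $L^\infty$ bounds on the correctors, every resulting contribution is $\ve^2$ times an $L^2$-inner product of derivatives of $u,v$ of total order at most two. The inequality $\|\nabla w\|_{2,\Omega}^2=-\int_\Omega w\Delta w\,dx\le\|w\|_{2,\Omega}\|\Delta w\|_{2,\Omega}$ for $w\in H^2_0(\Omega)$, combined with the AM-GM bound $\|\nabla u\|_{2,\Omega}\|\nabla v\|_{2,\Omega}\le\tfrac12(\|u\|_{2,\Omega}\|\Delta v\|_{2,\Omega}+\|v\|_{2,\Omega}\|\Delta u\|_{2,\Omega})$, handles the worst cross term, and the Poincar\'e inequality $\|w\|_{2,\Omega}\le C\|\Delta w\|_{2,\Omega}$ on the bounded domain absorbs the remaining lower-order mixed terms into the claimed right-hand side.

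The \emph{main obstacle} is arranging the decomposition in the previous display so that every remainder appears at order $\ve^2$ rather than $\ve$. A single application of the cell problem only produces a prefactor $\ve^2$ in front of $\Delta N^\ve$ and leaves the cross term $2\ve\sum_i(\partial_{x_i y_i}N)^\ve$, which alone would yield an $\ve$-estimate. The crucial observation is that this cross term coincides, up to a bounded $\ve^2$-order correction, with $2\ve^2$ times the $x$-divergence of the smooth field $(\nabla_x N)^\ve$, so the $O(\ve)$ contributions cancel algebraically and no second cell problem is required.
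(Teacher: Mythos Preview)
Your proof is correct and follows the same route as the paper: introduce the periodic corrector $N=\Psi$ solving $\Delta_y\Psi(x,y)=g(x,y)$, use the identity $g^\ve=\ve^2\big[\Delta\Psi^\ve-2\mathop{\mathrm{div}}((\nabla_x\Psi)^\ve)+(\Delta_x\Psi)^\ve\big]$, integrate by parts against $uv\in H^2_0(\Omega)$, and control the $\nabla u\cdot\nabla v$ cross term via $\|\nabla w\|_{2,\Omega}^2\le\|w\|_{2,\Omega}\|\Delta w\|_{2,\Omega}$ together with the Poincar\'e inequality. Your explicit derivation of the decomposition by eliminating the mixed $\partial_{x_iy_i}N$ term is a nice way to see why the naive $O(\ve)$ contribution cancels, which the paper simply asserts as an algebraic identity.
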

\begin{proof}
Let $\Psi(x,y) \in C^2(\overline{\Omega}; C^1({\ttt^d}))$, periodic in $y$, be defined by
\begin{align*}
\Delta_y \Psi(x,y) & = g(x,y), \,\,\,\, y \in \ttt^d,
\end{align*}
Since the local average of $g(x,y)$ is zero, $\Psi$ is well-defined.
By the Green formula,
\begin{align*}
& \int_\Omega g\big(x,\frac{x}{\ve}\big) u(x) v(x) \,dx
= \int_\Omega (\Delta_y \Psi)\big(x,\frac{x}{\ve}\big) u(x) v(x) \,dx \\
& \quad = \ve^2 \int_\Omega \big(  \Delta \Psi\big(x,\frac{x}{\ve}\big) - 2\mop{div}( (\nabla_x \Psi)(x,\frac{x}{\ve}) ) + (\Delta_x \Psi)(x,\frac{x}{\ve}) \big) u(x)v(x) \,dx \\
& \quad = \ve^2 \int_\Omega \big(  \Psi\big(x,\frac{x}{\ve}\big)  \Delta (uv)  + 2 (\nabla_x \Psi)(x,\frac{x}{\ve})  \cdot \nabla (uv) + (\Delta_x \Psi)(x,\frac{x}{\ve}) uv \big)  \,dx.
\end{align*}
After an application of the Green formula to the $\nabla u \cdot \nabla v$ term coming from $\Delta(u v)$, by the H{\"o}lder and triangle inequalities, we have
\begin{align*}
& \Big|  \int_\Omega g\big(x,\frac{x}{\ve}\big) u(x) v(x) \,dx \Big| \\
& \quad \le \ve^2 \Big( \| \Psi \|_{L^\infty(\Omega \times \ttt^d)} ( \| \Delta u \|_{2, \Omega} \| v \|_{2, \Omega} + \| u \|_{2, \Omega} \| \Delta v \|_{2, \Omega} ) \\
& \qquad\qquad + 2\| \nabla_x \Psi \|_{L^\infty(\Omega \times \ttt^d)} ( \| \nabla u \|_{2, \Omega} \| v \|_{2, \Omega} + \| u \|_{2, \Omega} \| \nabla v \|_{2, \Omega} ) \\
& \qquad\qquad + \| \Delta_x \Psi \|_{L^\infty(\Omega \times \ttt^d)} \| u \|_{2, \Omega} \| v \|_{2, \Omega} \Big).
\end{align*}
The estimate follows from the regularity of $\Psi$ and the Poincar\'e inequality.
\end{proof}

\begin{proof}[Proof of Lemma \ref{lm:est-lambda^eps}]
Let $v \in C^\infty_0(\rr^d)$ be such that $\| v \|_{2,\rr^d} = 1$, and set $v^\ve(x) = v(\ve^{-1/6}x)$.
We assume that $\ve$ is small enough such that $\mop{supp} v^\ve \subset \Omega$.
By the variational principle,
\begin{align}\label{eq:vp}
\lambda_1^\ve
& \le
\frac{\int_\Omega a_{ijkl}^\ve \partial_{ij} v^\ve \partial_{kl}^2 v^\ve \, dx +
\ve^{-1/3} \int_\Omega b_{ij}^\ve \partial_{i} v^\ve \partial_{j} v^\ve \, dx}{\int_\Omega (v^\ve)^2 \,dx}
+ \frac{1}{\ve} \frac{\int_{\Omega} c^\ve(v^\ve)^2 \,dx}{\int_\Omega (v^\ve)^2 \,dx}.
\end{align}
By the boundedness of the coefficients, the first fraction in \eqref{eq:vp} is bounded by $C \ve^{-2/3}$.
The second fraction in \eqref{eq:vp} is estimated using (H4) and Lemma \ref{lm:mvt}:
\begin{align*}
\frac{1}{\ve} \frac{\int_{\Omega} c(x,\frac{x}{\ve})(v^\ve)^2 \,dx}{\int_\Omega (v^\ve)^2 \,dx}
& = \frac{\bar c(0)}{\ve} + \frac{1}{\ve} \int_{\rr^d} ( \bar c(\ve^{1/6}x) -  \bar c( 0 )) v^2 \,dx \\
& \quad + \frac{1}{\ve} \int_{\rr^d} ( c(\ve^{1/6}x, \frac{x}{\ve^{5/6}}) -  \bar c( \ve^{1/6}x )) v^2 \,dx \\
& \le \frac{\bar c(0)}{\ve} +  C \ve^{ -2/3 },
\end{align*}
for sufficiently small $\ve > 0$.

In order to obtain an estimate from below for the first eigenvalue $\lambda_1^\ve$, we need to estimate the second derivatives of the corresponding eigenfunctions.
Let $u_1^\ve$ denote any eigenfunction corresponding to $\lambda_1^\ve$, normalized by $\|u_1^\ve\|_{2,\Omega} = 1$. Then, by \eqref{eq:weak-orig-prob},
\[
\lambda_1^\ve - \frac{1}{\ve}\int_\Omega c(x,\frac{x}{\ve}) (u_1^\ve)^2 \,dx
= \int_\Omega a_{ijkl}^\ve \partial_{ij} u_1^\ve \partial_{kl} u_1^\ve \, dx +
\frac{1}{\ve^{1/3}} \int_\Omega b_{ij}^\ve \partial_{i} u_1^\ve \partial_{j} u_1^\ve \, dx.
\]
On the one hand, by the ellipticity of $a_{ijkl}$ and the boundedness of the coefficients,
\begin{align}
\label{eq:frombelow}
\lambda_1^\ve - \frac{1}{\ve}\int_\Omega c(x,\frac{x}{\ve}) (u_1^\ve)^2 \,dx  
& \ge \Lambda \| \nabla\nabla u_1^\ve \|_{2, \Omega}^2 - C \frac{1}{\ve^{1/3}}\int_\Omega |\nabla u_1^\ve|^2\,dx \notag\\
& \ge \Lambda \| \nabla\nabla u_1^\ve \|_{2, \Omega}^2  - C_2\gamma \| \Delta u_1^\ve\|_{2, \Omega}^2 - \frac{C_2}{4\gamma}\ve^{-2/3}\| u_1^\ve \|_{2, \Omega}^2 \notag\\
& \ge C ( \| \nabla\nabla u_1^\ve \|_{2, \Omega}^2 - \ve^{-2/3} ),
\end{align}
where $\gamma > 0$ in the Cauchy inequality is chosen small enough such that the resulting constant $C$ is positive. Note that one can choose $\gamma$ that depends just on the ellipticity constant of $a_{ijkl}$ and the upper bound for $b_{ij}$.

On the other hand, from the upper estimate for $\lambda^\ve_1$,
\begin{align}
& \lambda_1^\ve - \frac{1}{\ve}\int_\Omega c(x,\frac{x}{\ve}) (u_1^\ve)^2 \,dx \notag\\
& \quad \le \frac{\bar c(0)}{\ve} - \frac{1}{\ve}\int_\Omega c(x,\frac{x}{\ve})(u_1^\ve)^2 \,dx + C \ve^{-2/3} \notag\\
& \quad = \frac{1}{\ve}\int_{\Omega} ( \bar c(0) - \bar c(x) ) (u_1^\ve)^2 \,dx
+ \frac{1}{\ve}\int_{\Omega} ( \bar c(x) - c(x,\frac{x}{\ve}) ) (u_1^\ve)^2 \,dx + C\ve^{-2/3} \notag\\
& \quad \le C (\ve \| \Delta u_1^\ve \|_{2,\Omega} + \ve^{-2/3}), \label{eq:fromabove}
\end{align}
where we in the third step used that $0$ is a minimum point for $\bar c(x)$ by (H4), and Lemma \ref{lm:mvt}.

Combining \eqref{eq:frombelow} and \eqref{eq:fromabove}, by the Cauchy inequality,
we deduce that,
\begin{align}\label{eq:uest}
\| \nabla\nabla u_1^\ve \|^2_{2,\Omega} \le C \ve^{-2/3}.
\end{align}

We proceed with the estimate from below for $\lambda_1^\ve$.
By \eqref{eq:frombelow} and \eqref{eq:uest} we have
\begin{align*}
\lambda_1^\ve
& \ge \frac{1}{\ve}\int_\Omega c(x,\frac{x}{\ve})(u_1^\ve)^2 \,dx - \frac{C}{\ve^{2/3}} \\
& =  \frac{\bar c(0)}{\ve}
+ \frac{1}{\ve}\int_\Omega (\bar c(x) - \bar c(0))(u_1^\ve)^2 \,dx
+ \frac{1}{\ve}\int_\Omega (c(x,\frac{x}{\ve}) - \bar c(x))(u_1^\ve)^2 \,dx
- \frac{C}{\ve^{2/3}}.
\end{align*}
By (H4) and Lemma~\ref{lm:mvt}, combined with \eqref{eq:uest}
\begin{align*}
\lambda_1^\ve
\ge \frac{\bar c(0)}{\ve} - \frac{C}{\ve^{2/3}},
\end{align*}
for all sufficiently small $\ve > 0$. In this way we have obtained the required estimates for the first eigenvalue $\lambda_1^\ve$. Since $\lambda_1^\ve$ is the smallest eigenvalue, the estimate from below for $\lambda_i^\ve$, $i>1$, follows from the corresponding estimate for $\lambda_1^\ve$.

To estimate $\lambda^\ve_i > \lambda_1^\ve$ for $i > 1$, one can use as a test function the projection of $v^\ve$ onto the orthogonal complement of the span of the first $i - 1$ eigenvectors, with respect to the $L^2(\Omega)$ inner product.
Since the span is finite dimensional this projection is nonzero for all sufficiently small $\ve > 0$.

Let $m_1$ be the multiplicity of the first eigenvalue $\lambda_1^\ve = \lambda_2^\ve =\cdots = \lambda_{m_1}^\ve$. 
We estimate from above $\lambda_{m_1}^\ve$, similar arguments can be applied to estimate other eigenvalues.

For $v\in C_0^\infty(\rr^d)$, we introduce $v^\ve(x) = v(\frac{x}{\ve^{1/6}})$ and denote
$\pi_{\ve, k} = \int_{\Omega} v^\ve u_k^\ve \,dx$, $k=1, \ldots, m_1$. Then $V^\ve(x) = v^\ve(x) - \sum_k \pi_{\ve, k}\, u_k^\ve(x)$ is orthogonal in $L^2(\Omega)$ to $\mop{span}\{u_1^\ve, \ldots, u_{m_1}^\ve\}$. For convenience we assume the normalization condition
\begin{align}
\label{eq:ax-2}
\|V^\ve\|_{2, \Omega}^2 = \|v^\ve\|_{2, \Omega}^2 - \sum_{k=1}^{m_1} \pi_{\ve, k}^2 = \ve^{d/6}.
\end{align}
Using $V^\ve$ as a test function in the variational principle, we deduce that
\begin{align}
\label{eq:ax-1}
\lambda_{m_1+1}^\ve \le \ve^{-d/6}\big(A_\ve(v^\ve, v^\ve) - \lambda_1^\ve \sum_k \pi_{\ve,k}^2\big).
\end{align}
By (H1)--(H3) and Lemma~\ref{lm:mvt} we get
\begin{align*}
A_\ve(v^\ve, v^\ve)- \lambda_1^\ve \sum_k \pi_{\ve,k}^2
& \le \frac{\bar c(0)}{\ve}\|v^\ve\|_{2, \Omega}^2 - \lambda_1^\ve \sum_k \pi_{\ve,k}^2
+ C \ve^{d/6}\ve^{-2/3}.
\end{align*}
Due to \eqref{eq:ax-2}, \eqref{eq:ax-1} and the estimate from below for $\lambda_1^\ve$,
\begin{align*}
\lambda_{m_1+1}^\ve \le \frac{\bar c(0)}{\ve} + \frac{C_1 \sum_k \pi_{\ve,k}^2}{\ve^{d/6}\ve^{2/3}} + \frac{C}{\ve^{2/3}}.
\end{align*}
Due to the normalization condition for $u_1^\ve$,
\[
\sum_k\pi_{\ve, k}^2 \le m_1 \ve^{d/6}\|v\|_{2, \rr^d}^2,
\]
thus
\begin{align*}
\lambda_{m_1+1}^\ve \le \frac{\bar c(0)}{\ve} + \frac{C_2}{\ve^{2/3}},
\end{align*}
and the estimate is proved.
\end{proof}

\subsection{Rescaling the problem and computing the asymptotics}
\label{Sec:rescaled problem}

\medskip
Led by Lemma \ref{lm:est-lambda^eps}, we shift the spectrum of \eqref{eq:orig-prob} by $\bar c(0)/\ve$ and rescale
such that the eigenvalues become bounded.
Let
\begin{align}
\label{change of variables}
z & = \frac{x}{\ve^{1/6}}, &
v^\ve(z) & = u^\ve(\ve^{1/6}z), &
\eta^\ve & = \ve^{2/3}\big(\lambda^\ve - \frac{\bar c(0)}{\ve}\big), &
\Omega_\ve & = \ve^{-1/6}\Omega.
\end{align}
Then \eqref{eq:weak-orig-prob} takes the form
\begin{align}
\label{eq:weak-rescaled-prob}
& \int_{\Omega_\ve} \hat a_{ijkl}^\ve \partial_{ij} v^\ve \partial_{kl} \varphi \, dz +
\int_{\Omega_\ve} \hat b_{ij}^\ve \partial_{i} v^\ve \partial_{j} \varphi \, dz
+ \frac{1}{\ve^{1/3}}\int_{\Omega_\ve}  (\hat c^\ve - \bar c(0)) v^\ve \varphi \, dz \notag \\
& \quad = \eta^\ve \int_{\Omega_\ve} v^\ve \varphi \, dz,
\end{align}
for any $v \in H_0^2(\Omega_\ve)$, where
\begin{align*}
\hat a_{ijkl}^\ve(z) & = a\big(\ve^{1/6}z, \frac{z}{\ve^{5/6}}\big), &
\hat b_{ij}^\ve(z) & = b\big(\ve^{1/6}z, \frac{z}{\ve^{5/6}}\big), &
\hat c^\ve(z) & = c\big(\ve^{1/6}z, \frac{z}{\ve^{5/6}}\big).
\end{align*}

For the rest of $R_2$, one uses $z = \ve^{-\alpha/2}x$.

In order to describe the asymptotic behavior of eigenpairs $(\eta_k^\ve, v_k^\ve)$ of \eqref{eq:weak-rescaled-prob},
we consider the Green operator
\begin{align*}
G_\ve: \,\, L^2(\Omega_\ve) & \,\, \to \,\,   L^2(\Omega_\ve), \\
 f_\ve & \,\, \mapsto \,\,  V^\ve,
\end{align*}
where $V^\ve \in H_0^2(\Omega_\ve)$ is the unique solution to the boundary-value problem
\begin{align}\label{eq:Green operator rescaled}
\begin{cases}
\partial_{ij}\big(\hat a_{ijkl}^\ve\partial_{kl} V^\ve\big)
+ \partial_i \big(\hat b_{ij}^\ve\partial_j V^\ve\big)
+ \frac{\hat c^\ve -\bar c(0)}{\ve^{1/3}} V^\ve  + \mu V^\ve = f_\ve, &  z \in \Omega_\ve,\\
V^\ve = \nabla V^\ve \cdot n = 0, &  z \in \partial \Omega_\ve.
\end{cases}
\end{align}
Here $\mu>0$ is a large enough constant, but depending just the ellipticity constant $\Lambda$. The operator $G_\ve$ can be considered as an operator from $L^2(\rr^d)$ into itself by extending the corresponding solution $V^\ve$ by zero outside $\Omega_\ve$. The existence and uniqueness is ensured by the Riesz-Fr\'echet representation theorem since the corresponding symmetric quadratic form
\begin{align}
\label{eq:quadratic form Green rescaled}
A_\ve(V^\ve, V^\ve) & = \int_{\Omega_\ve} \hat a_{ijkl}^\ve\partial_{kl} V^\ve \partial_{ij} V^\ve\, dz
 + \int_{\Omega_\ve} \hat b_{ij}^\ve\partial_j V^\ve \partial_i V^\ve\,dz \\
&\quad + \frac{1}{\ve^{1/3}} \int_{\Omega_\ve} (\hat c^\ve -\bar c(0)) (V^\ve)^2\, dz
 + \mu \int_{\Omega_\ve} (V^\ve)^2\, dz \notag
\end{align}
is coercive. Indeed, by (H1)--(H4) and Lemma~\ref{lm:mvt} we have:
\begin{align}
\label{eq:ax-7}
A_\ve(V^\ve, V^\ve) & \ge \Lambda \int_{\Omega_\ve} |\nabla\nabla V^\ve|^2\, dz
- \Lambda^{-1}\int_{\Omega_\ve} |\nabla V^\ve|^2\,dz \\
& \quad -  C_0\ve^{4/3} \|V^\ve\|_{2, \Omega_\ve} \|\Delta V^\ve\|_{2, \Omega_\ve} \notag\\
& \quad + \frac{1}{\ve^{1/3}}\int_{\Omega_\ve} (\bar c(\ve^{1/6}z) -\bar c(0)) (V^\ve)^2\, dz
 + \mu \|V^\ve\|_{2, \Omega_\ve}^2.\notag
\end{align}
Since the Hessian matrix $H$ is positive definite, there exist a positive constant $K_1$ such that
\begin{align*}
\bar c(\ve^{1/6}z)-\bar c(0) \ge K_1 |\ve^{1/6}z|^2.
\end{align*}
Due to the Dirichlet boundary conditions,
\begin{align}
\label{eq:ax-6}
\|\nabla V^\ve\|_{2, \Omega_\ve}^2 \le \|V^\ve\|_{2, \Omega_\ve}\|\Delta V^\ve\|_{2, \Omega_\ve}.
\end{align}
Applying the Cauchy inequality $\|V^\ve\|\, \|\Delta V^\ve\| \le \delta \|V^\ve\|^2 + \|\Delta V^\ve\|^2/\delta$ with $\delta =\Lambda/3$ in \eqref{eq:ax-7} we get 
\begin{align*}
A_\ve(V^\ve, V^\ve) & \ge \frac{\Lambda}{3} \int_{\Omega_\ve} |\nabla\nabla V^\ve|^2\, dz
+ K_1 \int_{\Omega_\ve} |z|^2 (V^\ve)^2\, dz\\
& \quad + \big(\mu - \frac{3}{\Lambda^3} - \frac{2C_0\ve^{4/3}}{\Lambda}\big) \|V^\ve\|_{2, \Omega_\ve}^2.
\end{align*}
For $\ve$ small enough, we can choose $\mu$ depending just on $\Lambda$ and $|\Omega|$ such that, for some positive constant $\tilde C$, we have 
\begin{align}
\label{eq:ax-8}
&A_\ve(V^\ve, V^\ve) \ge \tilde C \big(\|\nabla\nabla V^\ve\|_{2, \Omega_\ve}^2
+ \int_{\Omega_\ve} |z|^2 (V^\ve)^2\, dz
 + \|V^\ve\|_{2, \Omega_\ve}^2\big).
\end{align}
Even though $A_\ve(V^\ve, V^\ve) \ge C\| \Delta V^\ve \|_{2,\Omega_\ve}$ is enough for coercivity
of the quadratic form, we will make use of the last inequality.
The addition of the constant $\mu$ has the effect of shifting the entire spectrum
of \eqref{eq:weak-rescaled-prob} by $\mu$.

We  introduce also the limit Green operator
\begin{align*}
G: \,\, L^2(\rr^d) & \,\, \to \,\,   L^2(\rr^d), \\
 f & \,\, \mapsto \,\,  V,
\end{align*}
where $V\in H^2(\rr^d)$ is the unique solution to the equation
\begin{align}
\label{eq:eff Green operator}
\partial_{ij}\big(a_{ijkl}^\eff \partial_{kl} V\big)
- \partial_i \big(\bar b_{ij}(0)\partial_j V\big)
+ \frac{1}{2} (H z\cdot z)\, V + \mu V = f, \quad z \in \rr^d. 
\end{align}
Here $H$ is the Hessian matrix of $\bar c$ at $x=0$ (see (H4)), and the effective coefficients are defined by
\begin{align}
a_{ijkl}^\eff & = \int_{\ttt^d} (a_{ijmn}(0, y)\partial_{mn}N_{kl}(y) + a_{ijkl}(0, y))\, dy, \label{def:eff coefficients} \\
\bar b_{ij}(x) & = \int_{\ttt^d} b_{ij}(x, y) \, dy,\notag
\end{align}
where the periodic functions $N_{kl} \in H^2(\ttt^d)/\rr$ solve the following cell problems:
\begin{align}
\label{eq:N}
\partial_{ij}(a_{ijmn}(0, y)\partial_{mn}N_{kl}(y)) = -\partial_{ij}a_{ijkl}(0, y), \quad y \in \ttt^d.
\end{align}
Due to the periodicity of $a_{ijkl}$ in $y$, the above problem is well-posed, the solution $N_{kl}$ is unique up to an additive constant.

\begin{lemma}\label{lm:aeffcoercive}
Under the assumptions (H1)--(H4),
$a^\eff$ defined by \eqref{def:eff coefficients} is coercive on $\rr^{d \times d}$, i.e. there is a positive constant $C$ such that
$a^\eff_{ijkl}\xi_{ij}\xi_{kl} \ge C|\xi|^2$ for all $\xi \in \rr^{d \times d}$.
\end{lemma}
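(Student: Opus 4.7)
The plan is to mimic the classical energy proof of coercivity for effective second order tensors, adapted to the fourth order setting. Fix $\xi \in \rr^{d\times d}$, form the linear combination $N_\xi(y) = \xi_{kl}N_{kl}(y)$ (which lies in $H^2(\ttt^d)/\rr$), and introduce the ``corrected'' Hessian
\begin{align*}
M_{mn}(y) = \xi_{mn} + \partial_{mn} N_\xi(y).
\end{align*}
By linearity of the cell problem \eqref{eq:N}, $N_\xi$ satisfies $\partial_{ij}(a_{ijmn}(0,y) M_{mn}(y)) = 0$ in $\ttt^d$ in the periodic weak sense, i.e.\ $\int_{\ttt^d} a_{ijmn}(0,y) M_{mn}\, \partial_{ij}\varphi\, dy = 0$ for every periodic $\varphi \in H^2(\ttt^d)$.

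The first step is to rewrite $a^\eff_{ijkl}\xi_{ij}\xi_{kl}$ in symmetric form. Directly from \eqref{def:eff coefficients},
\begin{align*}
a^\eff_{ijkl}\xi_{ij}\xi_{kl} = \int_{\ttt^d} a_{ijmn}(0,y)\, M_{mn}(y)\, \xi_{ij}\, dy.
\end{align*}
Now I would test the cell problem against $\varphi = N_\xi$ to obtain $\int_{\ttt^d} a_{ijmn}(0,y) M_{mn}\,\partial_{ij} N_\xi\, dy = 0$. Adding this vanishing term to the previous identity yields the key symmetric representation
\begin{align*}
a^\eff_{ijkl}\xi_{ij}\xi_{kl} = \int_{\ttt^d} a_{ijmn}(0,y)\, M_{mn}(y)\, M_{ij}(y)\, dy.
\end{align*}

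The rest is then routine. Hypothesis (H3) applied pointwise in $y$ to the symmetric matrix $M(y)$ gives
\begin{align*}
a^\eff_{ijkl}\xi_{ij}\xi_{kl} \ge \Lambda \int_{\ttt^d} |M(y)|^2 \, dy.
\end{align*}
To pass from $\int |M|^2$ back to $|\xi|^2$ I would use that $N_\xi$ is periodic, so $\int_{\ttt^d}\partial_{mn} N_\xi\, dy = 0$ and the mean of $M_{mn}$ equals $\xi_{mn}$. Jensen's inequality (or Cauchy--Schwarz against the constant $1$, since $|\ttt^d|=1$) yields $\int_{\ttt^d} M_{mn}^2\, dy \ge \xi_{mn}^2$ componentwise; summing in $m,n$ gives $\int_{\ttt^d} |M|^2\, dy \ge |\xi|^2$, so that $a^\eff_{ijkl}\xi_{ij}\xi_{kl} \ge \Lambda |\xi|^2$.

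There is no real obstacle here, just bookkeeping. The two points to watch are: (a) the symmetry (H2) is needed implicitly when grouping $\int a_{ijmn}\xi_{ij}\partial_{mn} N_\xi = \int a_{ijmn}\xi_{mn}\partial_{ij} N_\xi$ type reindexings through the definition of $a^\eff$; and (b) the test function computation is legitimate because $N_\xi \in H^2(\ttt^d)/\rr$ solves \eqref{eq:N} in the weak periodic sense, so integration by parts on $\ttt^d$ produces no boundary terms. The coercivity constant coming out is in fact the same ellipticity constant $\Lambda$ from (H3).
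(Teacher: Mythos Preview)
Your proof is correct and follows the same symmetric-representation strategy as the paper: both test the cell problem with the corrector to reach
\[
a^\eff_{ijkl}\xi_{ij}\xi_{kl} \;=\; \int_{\ttt^d} a_{ijmn}(0,y)\,M_{mn}(y)\,M_{ij}(y)\,dy \;\ge\; \Lambda \int_{\ttt^d} |M(y)|^2\,dy,
\]
with $M_{ij}=\xi_{ij}+\partial_{ij}N_\xi$. The only genuine difference is the last step. The paper argues qualitatively: if $a^\eff_{ijkl}\xi_{ij}\xi_{kl}=0$ then $M\equiv 0$, whence $\xi_{ij}(y_i\delta_{qj}+\partial_q N_{ij})$ is constant in $y$, so periodicity of $N_{ij}$ forces the linear part $\xi_{iq}y_i$ to be periodic and hence $\xi=0$; positive definiteness plus compactness of the unit sphere in $\rr^{d\times d}$ then yields some constant $C>0$. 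Your Jensen/mean-value argument (using $\int_{\ttt^d}\partial_{mn}N_\xi\,dy=0$ and $|\ttt^d|=1$) is more direct and more quantitative: it gives $\int_{\ttt^d}|M|^2\,dy\ge|\xi|^2$ outright, so the coercivity constant is explicitly $C=\Lambda$ with no compactness step needed. Both routes are standard; yours is slightly sharper.

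A minor remark: in your derivation the symmetry hypothesis (H2) is not actually invoked---the identity $a^\eff_{ijkl}\xi_{ij}\xi_{kl}=\int a_{ijmn}M_{mn}M_{ij}\,dy$ follows from the cell equation alone, without reindexing. (The paper does use (H2), but only to record that $a^\eff$ itself is symmetric, which is not needed for the coercivity bound.)
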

\begin{proof}
Using (H1)--(H4) gives a well-defined $a^\eff$.
We rewrite \eqref{def:eff coefficients} as
\begin{align*}
a^\eff_{ijkl} & = \int_{\ttt^d} \delta_{pi} \delta_{qj} a_{pqrs}(0,y) ( \delta_{rk}\delta_{sl} + \partial_{rs}N_{kl} ) \,dy.
\end{align*}
Using $N_{ij}$ as a test function in equation \eqref{eq:N} for $N_{kl}$ gives
\begin{align*}
\int_{\ttt^d} a_{pqrs}(0,y)( \delta_{rk}\delta_{sl} + \partial_{rs}N_{kl} ) \partial_{pq}N_{ij} \,dy & = 0.
\end{align*}
Thus
\begin{align*}
a^\eff_{ijkl} & = \int_{\ttt^d} a_{pqrs}(0,y) ( \delta_{rk}\delta_{sl} + \delta_{rs}N_{kl} ) (\delta_{pi} \delta_{qj} + \partial_{pq}N_{ij}) \,dy.
\end{align*}
The last equation shows that $a^\eff$ is symmetric by (H2): $a^\eff_{ijkl} = a^\eff_{klij}$.
Moreover, with $\xi \in \rr^{d \times d}$ we obtain
\begin{align*}
a^\eff_{ijkl}\xi_{ij}\xi_{kl} & = \int_{\ttt^d} a_{pqrs}(0,y) ( \xi_{kl} (\delta_{rk}\delta_{sl} + \delta_{rs}N_{kl}) ) ( \xi_{ij}(\delta_{pi} \delta_{qj} + \partial_{pq}N_{ij})) \,dy \\
& \ge \Lambda \sum_{p,q}\int_{\ttt^d} \big| \xi_{ij}(\delta_{pi} \delta_{qj} + \partial_{pq}N_{ij}) \big|^2 \,dy,
\end{align*}
by the coerciveness of $a(0,y)$ guaranteed by (H3).
The last inequality implies that $a^\eff$ is positive definite.
Indeed, if $\xi \in \rr^{d \times d}$ is such that $a^\eff_{ijkl}\xi_{ij}\xi_{kl} = 0$, then
$\big| \xi_{ij}( \delta_{pi}\delta_{qj} + \partial_{pq}N_{ij} ) \big| = 0$ for all $p,q$.
It follows that $\nabla \xi_{ij}( y_i\delta_{qj} + \partial_{q}N_{ij} ) = 0$ for all $q$.
Therefore, $\xi_{ij}( y_i\delta_{qj} + \partial_{q}N_{ij} )$ is constant and so necessarily $\xi_{iq}y_i$ is periodic by the
periodicity of $N_{ij}$.
Hence $\xi_{iq} = 0$ for all $i, q$.
We conclude that for all $\xi \in \rr^{d \times d}\setminus \{0\}$,
\begin{align*}
a^\eff_{ijkl}\xi_{ij}\xi_{kl} & > 0.
\end{align*}
By the compactness of the unit ball in $\rr^{d \times d}$, there is a positive constant $C$ such that
$a^\eff_{ijkl}\xi_{ij}\xi_{kl} \ge C|\xi|^2$ for all $\xi \in \rr^{d \times d}$.
\end{proof}

The bilinear form corresponding to \eqref{eq:eff Green operator} takes the form
\begin{align*}
A_\eff(u, v) = \int_{\rr^d} \Big( a_{ijkl}^\eff \partial_{kl} u\partial_{ij} v
+ \bar b_{ij}\partial_j u \partial_i v
+ \frac{1}{2} (H z\cdot z)\, uv + \mu  uv \Big)\, dz,
\end{align*}
and it is coercive.
Namely, there exists a positive constant $\hat C$ such that
\begin{align*}
A_\eff(V, V) \ge \hat C \big(\| \nabla\nabla  V\|_{2, \rr^d}^2
+ \||z|V\|_{2, \rr^d}^2 + \|V\|_{2, \rr^d}^2\big).
\end{align*}
Thus, by the Riesz-Fr\'echet representation theorem, the Green operator is well-defined.
Using Lemma \ref{lm:aeffcoercive} we see that the operator $G$ is self-adjoint.
Moreover, due to the compact embedding of $H^2(\rr^d) \cap L^2(\rr^d, |z|^2dz)$ in $L^2(\rr^d)$, the operator $G$ is compact as an operator in $L^2(\rr^d)$; $L^2(\rr^d, |z|^2dz)$ is a weighted $L^2$-space with the weight $|z|^2$.
As a direct consequence, we have the following result.
\begin{lemma}
\label{lm:spectrum eff prob}
The spectrum of the limit problem
\begin{align}
\label{eq:eff spectral}
\partial_{ij}\big(a_{ijkl}^\eff \partial_{kl} v\big)
- \partial_i \big(\bar b_{ij}\partial_j v\big)
+ \frac{1}{2} (H z\cdot z)\, v = \eta v, \quad z \in \rr^d,
\end{align}
is real, discrete, and consists of a countably infinite number of eigenvalues, each of finite multiplicity:
\begin{align*}
& \eta_1 \le \eta_2 \le \cdots, & \eta_j \to \infty \text{ as } j \to \infty.
\end{align*}
The corresponding eigenfunctions $v_j^\ve$ form an orthonormal basis in
$L^2(\rr^d)$.
\end{lemma}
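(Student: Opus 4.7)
The plan is to deduce Lemma~\ref{lm:spectrum eff prob} directly from the Hilbert--Schmidt spectral theorem applied to the Green operator $G$ constructed just above the statement, together with the bijection between eigenpairs of $G$ and eigenpairs of \eqref{eq:eff spectral}. All the analytic work is already in place in the excerpt, so what remains is essentially an assembly argument.

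First I would record the three structural properties of $G: L^2(\rr^d) \to L^2(\rr^d)$. Boundedness follows from the Riesz--Fr\'echet representation theorem combined with the coercivity estimate $A_\eff(V, V) \ge \hat C(\|\nabla\nabla V\|_{2,\rr^d}^2 + \||z|V\|_{2,\rr^d}^2 + \|V\|_{2,\rr^d}^2)$ already stated. Self-adjointness is a consequence of the symmetry $a^\eff_{ijkl} = a^\eff_{klij}$ established in Lemma~\ref{lm:aeffcoercive} and $\bar b_{ij} = \bar b_{ji}$ from (H2), which makes $A_\eff$ symmetric. Positivity follows from $\langle G f, f\rangle_{L^2(\rr^d)} = A_\eff(V, V) \ge \hat C \|V\|_{2,\rr^d}^2 \ge 0$ with $V = Gf$, and injectivity is immediate since $Gf = 0$ forces $V = 0$ in \eqref{eq:eff Green operator}, hence $f = 0$. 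In particular every eigenvalue of $G$ is strictly positive.

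Next I would invoke the compact embedding $H^2(\rr^d) \cap L^2(\rr^d, |z|^2\,dz) \hookrightarrow L^2(\rr^d)$ to conclude that $G$ factors through a compact operator and is itself compact on $L^2(\rr^d)$. The spectral theorem for compact self-adjoint operators then furnishes a sequence $\sigma_1 \ge \sigma_2 \ge \cdots > 0$ with $\sigma_j \to 0$, each of finite multiplicity, and an orthonormal basis $\{v_j\}_{j \ge 1}$ of $L^2(\rr^d)$ with $G v_j = \sigma_j v_j$. Reading $G v_j = \sigma_j v_j$ back through the definition \eqref{eq:eff Green operator} of $G$, each $v_j$ lies in $H^2(\rr^d) \cap L^2(\rr^d, |z|^2\,dz)$ and satisfies \eqref{eq:eff spectral} with $\eta_j := \sigma_j^{-1} - \mu$. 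Since $\sigma_j \to 0^+$, reordering gives $\eta_1 \le \eta_2 \le \cdots$ with $\eta_j \to +\infty$, and the asserted multiplicity and basis properties transfer from $G$ to \eqref{eq:eff spectral}.

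I do not anticipate a serious obstacle, since coercivity of $A_\eff$, symmetry of $a^\eff$, and the confining weight $\tfrac12(Hz\cdot z)$ have all been produced above. The one point that warrants a brief justification is the compact embedding $H^2(\rr^d) \cap L^2(\rr^d, |z|^2\,dz) \hookrightarrow L^2(\rr^d)$: a bounded sequence in the intersection has an $L^2$-convergent subsequence on every ball by Rellich--Kondrachov, while the weight $|z|^2$ uniformly controls the tail, so a standard diagonal extraction yields a global $L^2$ limit. With this the rest of the argument is routine compact self-adjoint spectral theory.
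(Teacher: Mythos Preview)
Your proposal is correct and follows exactly the route the paper intends: the lemma is presented in the paper as ``a direct consequence'' of the properties of $G$ established immediately above (well-posedness via Riesz--Fr\'echet and coercivity of $A_\eff$, self-adjointness from Lemma~\ref{lm:aeffcoercive} and (H2), and compactness from the embedding $H^2(\rr^d)\cap L^2(\rr^d,|z|^2dz)\hookrightarrow L^2(\rr^d)$), and you have simply written out that deduction in full.
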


We proceed to the proof of the convergence of spectra.
We will prove that the Green operator $G^\ve$ converges uniformly to $G$ in $\mathcal{L}(L^2(\rr^d))$.
Then we apply the following result, the proof of the which can be found in \cite[Lemma~2.6]{AlCo} (see also \cite{anselone}),
to conclude the desired convergence of eigenvalues and eigenfunctions.
\begin{lemma}
\label{lm:uniform convergence}
Let $G_\ve$ be a sequence of compact self-adjoint operators acting in $L^2(\rr^d)$. Assume that $G_\ve$ converges uniformly to a compact self-adjoint operator $G$. Let $\eta_k^\ve$ and $\eta_k$ be the $k$th eigenvalues of the operators $G_\ve$ and $G$, respectively; $v_k^\ve$, $v_k$ are eigenfunctions corresponding to $\eta_k^\ve, \eta_k$. Then
as $\ve \to 0$,
\begin{enumerate}[(i)]
\item $\eta_k^\ve \to \eta_k$,
\item up to a subsequence, $v_k^\ve$ converges strongly in $L^2(\rr^d)$ to $v_k$.
\end{enumerate}
\end{lemma}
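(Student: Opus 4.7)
The plan is to invoke two classical facts about compact self-adjoint operators on a Hilbert space: the Courant--Fischer min-max characterization of eigenvalues for (i), together with weak compactness and the compactness of the limit operator $G$ for (ii). Both steps use only the operator-norm convergence $\|G_\ve - G\|_{\mathcal{L}(L^2(\rr^d))} \to 0$ and boundedness of the normalized eigenfunctions.

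For (i), order the positive eigenvalues of a compact self-adjoint operator $T$ on $L^2(\rr^d)$ in decreasing order with multiplicity. Each such eigenvalue admits the variational formula
\[
\eta_k(T) = \max_{\substack{E \subset L^2(\rr^d) \\ \dim E = k}} \, \min_{\substack{v \in E \\ \|v\|_{2,\rr^d}=1}} \langle Tv, v\rangle.
\]
The uniform bound $|\langle (G_\ve - G)v, v\rangle| \le \|G_\ve - G\|_{\mathcal L(L^2(\rr^d))}$ for all unit $v$ implies, after taking min-max and swapping the roles of $G_\ve$ and $G$, that $|\eta_k^\ve - \eta_k| \le \|G_\ve - G\|_{\mathcal L(L^2(\rr^d))} \to 0$.

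For (ii), start from $G_\ve v_k^\ve = \eta_k^\ve v_k^\ve$ with $\|v_k^\ve\|_{2, \rr^d} = 1$. By the Banach--Alaoglu theorem, along a subsequence $v_k^\ve \rightharpoonup v$ weakly in $L^2(\rr^d)$. Decompose
\[
\eta_k^\ve v_k^\ve = G_\ve v_k^\ve = G v_k^\ve + (G_\ve - G)v_k^\ve.
\]
Since $G$ is compact, $G v_k^\ve \to Gv$ strongly in $L^2(\rr^d)$, while the second term vanishes in norm by uniform convergence combined with boundedness of $\{v_k^\ve\}$. Hence $\eta_k^\ve v_k^\ve \to Gv$ strongly. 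Combined with $\eta_k^\ve \to \eta_k$ from step (i), and using $\eta_k \neq 0$, this forces $v_k^\ve \to v/\eta_k$ strongly in $L^2(\rr^d)$. The limit satisfies $Gv = \eta_k v$ and $\|v\|_{2,\rr^d} = |\eta_k|$, so after renormalization $v$ is an eigenfunction of $G$ associated with $\eta_k$.

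The main technical issues are the degenerate case $\eta_k = 0$ and the multiplicity of eigenvalues. In the present application $G$ is the resolvent of a coercive problem, so its spectrum consists of strictly positive eigenvalues accumulating only at zero; every indexed $\eta_k$ is nonzero, and the division by $\eta_k$ above is legitimate. When $\eta_k$ has multiplicity greater than one, different subsequences may select different directions inside the eigenspace $\ker(G - \eta_k I)$, which is precisely why the convergence in (ii) must be stated up to extraction of a subsequence; a compactness argument on finite-dimensional eigenspaces then recovers a full basis of limit eigenfunctions if needed.
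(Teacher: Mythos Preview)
The paper does not give its own proof of this lemma; it simply cites \cite[Lemma~2.6]{AlCo} and \cite{anselone}. Your argument is the standard one behind those references and is correct in outline: Courant--Fischer min--max for the eigenvalues, and compactness of $G$ together with operator-norm convergence of $G_\ve$ for the eigenfunctions.

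There is, however, a slip in the write-up of part (ii). You set $v$ to be the weak limit of $v_k^\ve$, and then correctly obtain $\eta_k^\ve v_k^\ve \to Gv$ strongly. Dividing by $\eta_k^\ve \to \eta_k \neq 0$ gives $v_k^\ve \to Gv/\eta_k$ strongly, not $v/\eta_k$. Uniqueness of weak and strong limits then forces $Gv/\eta_k = v$, i.e.\ $Gv = \eta_k v$, so that $v_k^\ve \to v$ strongly with $\|v\|_{2,\rr^d}=1$; no renormalization is needed. As written, your conclusion $\|v\|_{2,\rr^d} = |\eta_k|$ contradicts the fact that $v$ is the weak (hence strong) limit of unit vectors. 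Once this is corrected, the argument is complete, and your closing remarks on $\eta_k \neq 0$ and on multiplicity are exactly the right caveats for the application in the paper.
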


The uniform convergence of the Green operators is a straightforward consequence of the convergence of the solutions to the corresponding boundary value problems with weakly converging data in $L^2(\rr^d)$, as has been pointed out in \cite[Theorem~2.2]{AlCo}.

\begin{lemma}
\label{lm:pass to the limit}
Let $f_\ve$ be a sequence converging weakly to $f$ in $L^2(\rr^d)$, and let $V^\ve$ be the unique solution of \eqref{eq:Green operator rescaled}. Then $V^\ve$ converges weakly in $H^2(\rr^d)$ and strongly in $L^2(\rr^d)$ to the unique solution $V$ of the effective problem \eqref{eq:eff Green operator}. Moreover,
\begin{align*}
\nabla V^\ve & \overset{2}{\rightharpoonup} \nabla V(z) \,\, \mbox{two-scale in} \,\, L^2(\rr^d),\\
\nabla\nabla V^\ve & \overset{2}{\rightharpoonup} \nabla \nabla V(z) + \nabla_\zeta\nabla_\zeta N_{kl}(\zeta) \partial_{kl} V(z) \text{ two-scale in } L^2(\rr^d),
\end{align*}
where $N_{kl} \in H^2(\ttt^d)/\rr$, $k,l=1, \ldots, d$, solve problem \eqref{eq:N}.
\end{lemma}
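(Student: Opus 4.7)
The plan is to derive uniform a priori bounds on $V^\ve$ from the coercivity estimate \eqref{eq:ax-8}, extract weak and two-scale limits along subsequences, and pass to the limit in each term of the rescaled weak form \eqref{eq:weak-rescaled-prob} using a combination of two-scale convergence for the biharmonic part, weak-strong products for the second-order part, and a delicate decomposition for the singular zeroth-order part. Uniqueness of the effective problem then promotes subsequential convergence to convergence of the whole sequence.

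First, testing \eqref{eq:Green operator rescaled} with $V^\ve$ itself and invoking \eqref{eq:ax-8} together with the Cauchy inequality on $\int f_\ve V^\ve$ yields the uniform bound
\begin{align*}
\|\nabla\nabla V^\ve\|_{2,\Omega_\ve}^2 + \int_{\Omega_\ve} |z|^2 (V^\ve)^2\, dz + \|V^\ve\|_{2,\Omega_\ve}^2 \le C\|f_\ve\|_{2,\rr^d}^2.
\end{align*}
Extending by zero, $V^\ve$ is bounded in $H^2(\rr^d) \cap L^2(\rr^d;|z|^2 dz)$. Along a subsequence, $V^\ve \rightharpoonup V$ weakly in $H^2(\rr^d)$; the weighted bound provides equi-tightness at infinity, so by the compact embedding $H^2(\rr^d) \cap L^2(\rr^d;|z|^2 dz) \hookrightarrow L^2(\rr^d)$ one also has $V^\ve \to V$ strongly in $L^2(\rr^d)$. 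Passing to a further subsequence, two-scale compactness for $H^2$-bounded sequences (fast scale $\ve^{5/6}$ in $z$) produces $\nabla V^\ve \overset{2}{\rightharpoonup} \nabla V(z)$ and $\nabla\nabla V^\ve \overset{2}{\rightharpoonup} \nabla\nabla V(z) + \nabla_\zeta \nabla_\zeta V_1(z,\zeta)$ for some $V_1$ periodic in $\zeta$.

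To identify the limit equation, I pass to the limit in \eqref{eq:weak-rescaled-prob} using test functions $\varphi^\ve(z) = \varphi(z) + \ve^{5/3} N_{kl}(z/\ve^{5/6})\partial_{kl}\varphi(z)$, with $\varphi \in C_c^\infty(\rr^d)$ and $N_{kl}$ as in \eqref{eq:N}, so that $\partial_{ij}\varphi^\ve$ two-scale converges to $\partial_{ij}\varphi(z) + \partial_{\zeta_i\zeta_j}(N_{mn}(\zeta)\partial_{mn}\varphi(z))$. Combining with the two-scale limit of $\partial_{ij}V^\ve$, the fourth-order term converges to
\begin{align*}
\int_{\rr^d}\!\!\int_{\ttt^d} a_{ijkl}(0,\zeta)\bigl(\partial_{ij}V + \partial_{\zeta_i\zeta_j}V_1\bigr)\bigl(\partial_{kl}\varphi + \partial_{\zeta_k\zeta_l}(N_{mn}\partial_{mn}\varphi)\bigr) d\zeta\, dz.
\end{align*}
A separate passage with a purely oscillating test $\ve^{5/3}\Psi(z,z/\ve^{5/6})$ forces $V_1(z,\zeta)=N_{mn}(\zeta)\partial_{mn}V(z)$ via the cell problem \eqref{eq:N}; substituting back yields the effective contribution $\int a^\eff_{ijkl}\partial_{kl}V\,\partial_{ij}\varphi\,dz$ with $a^\eff$ as in \eqref{def:eff coefficients}. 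The second-order term $\int \hat b^\ve_{ij}\partial_i V^\ve \partial_j \varphi^\ve\,dz$ converges to $\int \bar b_{ij}(0)\partial_i V\,\partial_j \varphi\,dz$ via the strong convergence $\partial_i V^\ve \to \partial_i V$ in $L^2_{\rm loc}(\rr^d)$ (Rellich on a ball containing $\mop{supp}\varphi$) paired against the weak-$L^2_{\rm loc}$ limit $\bar b_{ij}(0)\partial_j \varphi$ of $\hat b^\ve_{ij}\partial_j\varphi^\ve$, which follows from continuity of $b_{ij}$ at $x=0$ and averaging of the fast variable. For the singular term I split
\begin{align*}
\tfrac{1}{\ve^{1/3}}\bigl(\hat c^\ve - \bar c(0)\bigr) = \tfrac{1}{\ve^{1/3}}\bigl(c(\ve^{1/6}z,\tfrac{z}{\ve^{5/6}}) - \bar c(\ve^{1/6}z)\bigr) + \tfrac{1}{\ve^{1/3}}\bigl(\bar c(\ve^{1/6}z) - \bar c(0)\bigr);
\end{align*}
the first summand has zero $y$-average and is controlled by a rescaled version of Lemma \ref{lm:mvt} (fast scale $\ve^{5/6}$ gives an $\ve^{5/3}$ gain, leaving $O(\ve^{4/3})\to 0$), while the second summand is expanded via (H4) using $\nabla \bar c(0)=0$ as $\tfrac12(Hz\cdot z) + \ve^{-1/3}o(|\ve^{1/6}z|^2)$; integration against $V^\ve\varphi^\ve$ passes to $\int \tfrac12(Hz\cdot z)V\varphi\,dz$ thanks to the $|z|$-weighted bound on $V^\ve$ and the compact support of $\varphi$. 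Finally $\int f_\ve\varphi^\ve \to \int f\varphi$ by weak-strong convergence. Hence $V$ solves \eqref{eq:eff Green operator} weakly; uniqueness from the coercivity of $A_\eff$ established after Lemma \ref{lm:aeffcoercive} identifies the limit and upgrades subsequential to full convergence.

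The main obstacle is the singular zeroth-order term. The factor $\ve^{-1/3}$ only barely cancels against the oscillation gain of Lemma \ref{lm:mvt}, since the bound $\|\nabla\nabla V^\ve\|_{2,\rr^d}=O(1)$ is not small; moreover the macroscopic piece must be Taylor expanded with quantitative control of the quadratic remainder on all of $\rr^d$. This is precisely why the stronger coercivity \eqref{eq:ax-8} with the $|z|^2$-weight was built up, rather than relying on a bound on $\|\Delta V^\ve\|_{2,\Omega_\ve}$ alone.
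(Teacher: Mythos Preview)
Your proof is correct and follows essentially the same route as the paper: a priori bounds from \eqref{eq:ax-8}, two-scale compactness for the $H^2$-bounded sequence, an oscillating test $\ve^{5/3}\Psi(z,z/\ve^{5/6})$ to identify the corrector $V_1=N_{kl}\partial_{kl}V$, then a slow test to recover the effective equation, with Lemma~\ref{lm:mvt} and the Taylor expansion from (H4) handling the two pieces of the singular zeroth-order term exactly as in the paper's splitting in \eqref{eq:weak-rescaled-Green}. The only cosmetic difference is that the paper keeps the two test functions separate (first $\ve^{5/3}\varphi(z)\psi(z/\ve^{5/6})$ with smooth $\psi\in C^\infty(\ttt^d)$, then plain $\varphi(z)$) rather than also building the corrector $N_{kl}$ into a combined oscillating test $\varphi^\ve$; this sidesteps any admissibility question for $\partial_{\zeta_i\zeta_j}N_{kl}\in L^2(\ttt^d)$ as a two-scale test, but since you also carry out the separate oscillating passage the redundancy is harmless and your argument stands.
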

\begin{proof}
The proof consists of two parts.
First, we derive a priori estimates for $V^\ve$.
Second, we pass to the two-scale limit in order to obtain the effective problem.

The estimates for $V^\ve$ follows from \eqref{eq:ax-8}:
\begin{align}
\label{eq:a priori est V^eps}
\| \nabla \nabla V^\ve\|_{2, \Omega_\ve}^2
+ \|\nabla V^\ve\|_{2, \Omega_\ve}^2
+ \||z|V^\ve\|_{2, \Omega_\ve}^2 + \|V^\ve\|_{2, \Omega_\ve}^2 \le C\|f_\ve\|_{2, \Omega_\ve}^2.
\end{align}
Having in hand the a priori estimate \eqref{eq:a priori est V^eps}, we deduce (see, for example, Proposition~1.14 in \cite{Al-1992}) that in $L^2(\rr^d)$ we have the following two-scale convergences:
\begin{align}
\label{eq:two-scale conv}
V^\ve & \overset{2}{\rightharpoonup} V(z), \quad
\nabla V^\ve  \overset{2}{\rightharpoonup} \nabla V(z),& 
\nabla\nabla V^\ve & \overset{2}{\rightharpoonup} \nabla \nabla V(z) + \nabla_\zeta\nabla_\zeta W(z,\zeta),
\end{align}
where $W(z, \zeta) \in L^2(\rr^d; H^2(\ttt^d))$.
The strong convergence of $V^\ve$ to $V$ in $L^2(\rr^d)$ follows also from \eqref{eq:a priori est V^eps}, namely from the boundedness of weighted $L^2$-norm, which gives compactness. 
We are going to pass to the limit in the weak formulation of \eqref{eq:Green operator rescaled}:
\begin{align}
& \int_{\Omega_\ve} \hat a_{ijkl}^\ve\partial_{kl}V^\ve \partial_{ij}\Phi^\ve\, dz
+ \int_{\Omega_\ve} \hat b_{ij}^\ve\partial_{i}V^\ve \partial_{j}\Phi^\ve\, dz
+ \frac{1}{\ve^{1/3}}\int_{\Omega_\ve} (\hat c^\ve-\bar c(\ve^{1/6}z)) V^\ve \Phi^\ve\, dz \notag\\
& \quad + \frac{1}{\ve^{1/3}}\int_{\Omega_\ve} (\bar c(\ve^{1/6}z)-\bar c(0)) V^\ve \Phi^\ve\, dz
+ \mu \int_{\Omega_\ve} V^\ve \Phi^\ve\, dz
 = \int_{\Omega_\ve} f_\ve \Phi^\ve\, dz,\label{eq:weak-rescaled-Green}
\end{align}
where $\Phi^\ve(z) = \Phi(z, \frac{z}{\ve^{5/6}})$, $\Phi(z, \zeta) \in C(\overline \Omega_\ve; L^\infty(\ttt^d))$. Note that the term containing $\bar c(\ve^{1/6}z)$ is added and subtracted for convenience, since we are going to use Lemma~\ref{lm:mvt} when passing to the limit. Due to the regularity assumptions (H1), the coefficients can be regarded as a part of a test function.

First we take a test function $\Phi_\ve = \ve^{5/3} \varphi(z)\psi(\frac{z}{\ve^{5/6}})$, with $\varphi \in C_0^\infty(\rr^d), \psi \in C^\infty(\ttt^d)$. Then \eqref{eq:weak-rescaled-Green} transforms into
\begin{align*}
& \int_{\Omega_\ve} \hat a_{ijkl}^\ve\partial_{kl}V^\ve \big(\varphi(z)\partial_{ij}\psi(\zeta) + \ve^{5/6} \partial_j\varphi(z) \partial_i \psi(\zeta)
+ \ve^{5/6} \partial_i\varphi(z) \partial_j \psi(\zeta) \\
& \quad +\ve^{5/3} \partial_{ij}\varphi(z)\psi(\zeta)\big)\big|_{\zeta = \frac{z}{\ve^{5/6}}}\, dz \\
& \quad + \int_{\Omega_\ve} \hat b_{ij}^\ve\partial_{i}V^\ve \big(\ve^{5/6} \varphi(z)\partial_i\psi(\zeta) + \ve^{5/3}\psi(\zeta)\partial_i\varphi(z) \big)\big|_{\zeta = \frac{z}{\ve^{5/6}}}\, dz \\
& \quad +  \ve^{4/3} \int_{\Omega_\ve} (\hat c^\ve-\bar c(\ve^{1/6}z)) V^\ve(z) \varphi(z)\psi(\frac{z}{\ve^{5/6}})\, dz \\
& \quad +  \ve^{4/3}\int_{\Omega_\ve} (\bar c(\ve^{1/6}z)-\bar c(0)) V^\ve(z) \varphi(z)\psi(\frac{z}{\ve^{5/6}})\, dz \\
& \quad + \mu \ve^{5/3} \int_{\Omega_\ve} V^\ve(z)  \varphi(z)\psi(\frac{z}{\ve^{5/6}})\, dz
  = \ve^{5/3} \int_{\Omega_\ve} f_\ve(z) \varphi(z)\psi(\frac{z}{\ve^{5/6}})\, dz.
\end{align*}
Using Lemma~\ref{lm:mvt} and \eqref{eq:two-scale conv} we may pass to the limit, as $\ve \to 0$, using the two-scale convergence, and obtain
\begin{align*}
& \int_{\rr^d}\int_{\ttt^d} a_{ijmn}(0,\zeta)\partial_{\zeta_m \zeta_n}W(z, \zeta) \partial_{ij}\psi(\zeta) \varphi(z)\, d\zeta \, dz \\
& \quad = -\int_{\rr^d}\int_{\ttt^d} a_{ijkl}(0,\zeta)\partial_{kl}V(z) \partial_{ij}\psi(\zeta) \varphi(z)\, d\zeta \, dz.
\end{align*}
From the last identity we deduce that $W(z, \zeta)=N_{kl}(\zeta)\partial_{kl}V(z)$, where the periodic functions $N_{kl}(\zeta)$ solve \eqref{eq:N}.

Now we take $\Phi_\ve = \varphi(z) \in C_0^\infty(\rr^d)$ as a test function in \eqref{eq:weak-rescaled-Green},
and passing to the limit get the weak formulation of the effective problem \eqref{eq:eff Green operator}:
\begin{align*}
& \int_{\rr^d}\int_{\ttt^d} \big(a_{ijmn}(0,\zeta)\partial_{mn}N_{kl}(\zeta) +  a_{ijkl}(0,\zeta)\big) \partial_{ij}\varphi(z)\, d\zeta \, dz \\
& \quad + \int_{\rr^d}\int_{\ttt^d} b_{ij}(0, \zeta)d\zeta \partial_j V(z) \partial_i \varphi(z)\, dz
+\frac{1}{2}\int_{\rr^d} (Hz\cdot z)\, V(z)\varphi(z)\, dz \\
& \quad + \mu \int_{\rr^d} V(z)\varphi(z)\, dz
= \int_{\rr^d} f(z)\varphi(z)\, dz,
\end{align*}
for any $\varphi \in C_0^\infty(\rr^d)$.
Lemma~\ref{lm:pass to the limit} is proved.
\end{proof}

\begin{lemma}\label{lm:greenconvergence}
The Green operator of \eqref{eq:Green operator rescaled} converges uniformly, in $\mathcal{L}(L^2(\rr^d))$, to the Green operator of \eqref{eq:eff Green operator}, as $\ve \to 0$.
\end{lemma}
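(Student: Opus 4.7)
My plan is to deduce the uniform (norm) convergence $G_\ve \to G$ in $\mathcal{L}(L^2(\rr^d))$ from the weak-to-strong convergence established in Lemma~\ref{lm:pass to the limit} combined with the compactness of $G$, via the standard contradiction argument that upgrades pointwise-strong convergence of a sequence of compact operators to uniform convergence when the limit is itself compact.

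Concretely, I would argue by contradiction: suppose the conclusion fails. Then there exist $\delta > 0$, a subsequence $\ve \to 0$ (still denoted by $\ve$), and functions $f_\ve \in L^2(\rr^d)$ with $\| f_\ve \|_{2,\rr^d} = 1$ such that $\| (G_\ve - G) f_\ve \|_{2,\rr^d} \ge \delta$. Since the unit ball of $L^2(\rr^d)$ is weakly sequentially compact, after extracting a further subsequence we may assume $f_\ve \rightharpoonup f$ weakly in $L^2(\rr^d)$ for some $f$. Lemma~\ref{lm:pass to the limit} then applies (with the convention that $V^\ve = G_\ve f_\ve$ is extended by zero from $\Omega_\ve$ to $\rr^d$) and yields $G_\ve f_\ve \to G f$ strongly in $L^2(\rr^d)$. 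On the other hand, $G$ is compact on $L^2(\rr^d)$, as was established just before Lemma~\ref{lm:spectrum eff prob} via the compact embedding $H^2(\rr^d) \cap L^2(\rr^d, |z|^2\,dz) \hookrightarrow L^2(\rr^d)$, so $G f_\ve \to G f$ strongly in $L^2(\rr^d)$ as well. Subtracting these two convergences gives $(G_\ve - G) f_\ve \to 0$ strongly in $L^2(\rr^d)$, contradicting $\| (G_\ve - G) f_\ve \|_{2,\rr^d} \ge \delta$.

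The only nontrivial point I expect to have to address carefully is the compatibility between $G_\ve$ being defined via a problem posed on the growing domain $\Omega_\ve = \ve^{-1/6}\Omega$ and $G$ being defined on all of $\rr^d$. The sequence $f_\ve$ is a priori given on $\rr^d$; to feed it into $G_\ve$ one restricts it to $\Omega_\ve$. Since $\Omega_\ve \nearrow \rr^d$, the restrictions converge weakly to $f$ in $L^2(\rr^d)$ (extended by zero), so the hypothesis of Lemma~\ref{lm:pass to the limit} is met. Similarly, the a priori estimate \eqref{eq:a priori est V^eps} bounds $V^\ve$ uniformly in $H^2$ and in the weighted norm $\| |z| V^\ve \|_{2,\Omega_\ve}$, which justifies passing to two-scale limits across the expanding domains.

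In summary, the proof is a short soft-analysis argument: the hard analytic work (the a priori estimates, the two-scale limit passage, the identification of the effective problem, and the compactness of $G$) has already been performed in the preceding lemmas, so the main remaining task is simply to assemble these ingredients through the contradiction scheme above.
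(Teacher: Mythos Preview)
Your argument is correct and is essentially the same as the paper's: both extract a weakly convergent subsequence from (near-)maximizers of $\|(G_\ve - G)f\|$, invoke Lemma~\ref{lm:pass to the limit} to get $G_\ve f_\ve \to Gf$ strongly, and use the compactness of $G$ to get $G f_\ve \to Gf$ strongly. The only cosmetic difference is that you phrase it as a contradiction while the paper works directly with a maximizing sequence.
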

\begin{proof}
Let $f_\ve \in L^2(\rr^d)$ be a maximizing sequence for $\sup_{ \| f \|_{2,\rr^d} = 1 } \| (G^\ve - G)f \|_{2,\rr^d}$.
By compactness there is a subsequence $f_\ve$ weakly converging to some $f$ in $L^2(\rr^d)$.
By Lemma \ref{lm:pass to the limit}, $G_\ve f_\ve \to Gf$ strongly in $L^2(\rr^d)$, and by the compactness of
$G$, $Gf_\ve \to Gf$ strongly in $L^2(\rr^d)$.
Hence
\begin{align*}
\|G_\ve - G\| \le \|  G_\ve f_\ve - G f  \|_{2,\rr^d} + \| Gf_\ve - Gf  \|_{2,\rr^d} + o(1),
\end{align*}
as $\ve \to 0$, and the convergence along a subsequence follows.
Since the limit $G_\ve f_\ve$ is unique by Lemma \ref{lm:pass to the limit}, the whole sequence converges.
 \end{proof}

Due to the Lemma~\ref{lm:greenconvergence}, the sequence of the Green operators of the rescaled  problem \eqref{eq:Green operator rescaled} converges uniformly to the Green operator of the effective problem \eqref{eq:eff Green operator}. Lemma~\ref{lm:uniform convergence} applied to the Green operators ensures the convergence of spectrum of the rescaled problem \eqref{eq:weak-rescaled-prob}.
\begin{lemma}
\label{lm:conv spectrum rescaled}
Let $(\eta_k^\ve, v_k^\ve)$ be $k$th eigenpair of the rescaled spectral problem \eqref{eq:weak-rescaled-prob}.
Then under the assumptions (H1)--(H4): As $\ve \to 0$,
\begin{enumerate}[(a)]
\item
$\eta_k^\ve \to \eta_k$, where $\eta_k$ is the $k$th eigenvalue of the effective problem \eqref{eq:eff spectral},
\item
 along a subsequence $v_k^\ve$ converges weakly in $H^2(\rr^d)$ and strongly in $L^2(\rr^d)$ to $v_k$,
 where $v_k$ is the eigenfunction corresponding to $\eta_k$ under a proper orthonormalisation.
\end{enumerate}
\end{lemma}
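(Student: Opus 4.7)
The plan is to reduce the unbounded eigenvalue problem \eqref{eq:weak-rescaled-prob} to an eigenvalue problem for the compact self-adjoint Green operator $G_\ve$ of \eqref{eq:Green operator rescaled}, then appeal directly to Lemma~\ref{lm:greenconvergence} and Lemma~\ref{lm:uniform convergence}, and finally bootstrap the strong $L^2$ convergence of eigenfunctions to weak $H^2$ convergence via the a~priori estimate already established in the proof of Lemma~\ref{lm:pass to the limit}.

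First, I would make explicit the spectral correspondence. By construction, $\mu > 0$ was chosen so that the quadratic form \eqref{eq:quadratic form Green rescaled} is coercive, and hence $G_\ve$ is a compact self-adjoint operator on $L^2(\rr^d)$ (using the extension of solutions by zero outside $\Omega_\ve$, and the compact embedding of $H^2_0(\Omega_\ve)$ into $L^2$ on bounded subsets plus the weighted control from \eqref{eq:ax-8}). The pair $(\eta^\ve, v^\ve)$ is an eigenpair of \eqref{eq:weak-rescaled-prob} if and only if $\bigl((\eta^\ve + \mu)^{-1}, v^\ve\bigr)$ is an eigenpair of $G_\ve$; the same correspondence holds between \eqref{eq:eff spectral} and the limit operator $G$. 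Multiplicities are preserved under this bijection, and the ordering is reversed.

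Next, Lemma~\ref{lm:greenconvergence} gives the uniform convergence of $G_\ve$ to $G$ in $\mathcal{L}(L^2(\rr^d))$, so Lemma~\ref{lm:uniform convergence} applies. Translating back through the correspondence above yields conclusion (a), namely $\eta_k^\ve \to \eta_k$, as well as strong $L^2(\rr^d)$ convergence, along a subsequence, of $v_k^\ve$ to an eigenfunction $v_k$ of \eqref{eq:eff spectral} associated with $\eta_k$ (after fixing a suitable orthonormalization within each eigenspace, handled by the standard finite-dimensional argument when $\eta_k$ is degenerate).

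Finally, I would upgrade this to weak $H^2(\rr^d)$ convergence. Writing $v_k^\ve$ as a solution of \eqref{eq:Green operator rescaled} with right-hand side $f_\ve = (\eta_k^\ve + \mu)\, v_k^\ve$, which is uniformly bounded in $L^2(\rr^d)$ since $\|v_k^\ve\|_{2,\rr^d} = 1$ and $\eta_k^\ve \to \eta_k$, the a~priori estimate \eqref{eq:a priori est V^eps} yields
\begin{align*}
\|\nabla\nabla v_k^\ve\|_{2,\rr^d}^2 + \| |z|\, v_k^\ve \|_{2,\rr^d}^2 + \|v_k^\ve\|_{2,\rr^d}^2 \le C (\eta_k^\ve + \mu)^2.
\end{align*}
Hence $\{v_k^\ve\}$ is bounded in $H^2(\rr^d)$, and, along a further subsequence, converges weakly in $H^2(\rr^d)$. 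The weak $H^2$ limit must coincide with the strong $L^2$ limit $v_k$ by uniqueness of limits, which establishes (b). The only delicate point is handling the normalization and multiplicity across subsequences, but this is a routine finite-dimensional argument given that each eigenspace of \eqref{eq:eff spectral} is finite-dimensional by Lemma~\ref{lm:spectrum eff prob}.
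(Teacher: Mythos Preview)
Your proposal is correct and follows essentially the same route as the paper: the paper's proof is the single sentence preceding the lemma, which combines Lemma~\ref{lm:greenconvergence} with Lemma~\ref{lm:uniform convergence}. You spell out two points the paper leaves implicit---the bijection $(\eta^\ve,v^\ve)\leftrightarrow((\eta^\ve+\mu)^{-1},v^\ve)$ between eigenpairs of \eqref{eq:weak-rescaled-prob} and of $G_\ve$, and the upgrade from strong $L^2$ to weak $H^2$ convergence via the a~priori bound \eqref{eq:a priori est V^eps}---but these are exactly the details the paper's terse argument is relying on.
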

The last lemma combined with \eqref{change of variables} yields Theorem~\ref{th:main}.


\bigskip

In the next sections we consider the cases when $(\alpha, \beta)$ belong to $R_1, R_3, R_4, R_5$.
\section{The case $(\alpha,\beta) \in R_1$}
\label{sec:R_1}

Recall that
\begin{align*}
R_1 & = \{ (\alpha,\beta) : 0 \le \alpha < 1, \,\, 3\alpha < \beta <3\}.
\end{align*}

\begin{theorem}
\label{th:case R_1}
Let $(\alpha, \beta)\in R_1$ and let $(\lambda_k^\ve, u_k^\ve)$ be the $k$th eigenpair of \eqref{eq:orig-prob} normalized by $\|u_k^\ve\|_{2, \Omega}^2=\ve^{d\beta/6}$.
Suppose that the conditions (H1)--(H4) are satisfied.
Then we have the following representation:
\begin{align*}
u_k^\ve = v_k^\ve\big(\frac{x}{\ve^{\beta/6}}\big), \quad
\lambda_k^\ve = \frac{\bar c(0)}{\ve^\beta} + \frac{\eta_k^\ve}{\ve^{2\beta/3}},
\end{align*}
where $(\eta_k^\ve, v_k^\ve)$ are such that as $\ve \to 0$,
\begin{enumerate}[(i)]
\item $\eta_k^\ve \to \eta_k$,
\item up to a subsequence, $v_k^\ve$ converges to $v_k$ weakly in $H^2(\rr^d)$ and strongly in $L^2(\rr^d)$,
\end{enumerate}
where $\eta_k$ is the $k$th eigenvalue, and $v_k$ is an eigenfunction corresponding to $\eta_k$ normalized by $\| v_k \|_{2,\rr^d} = 1$, of the uniformly elliptic effective spectral problem
\begin{align*}
\partial_{ij}(a_{ijkl}^\eff \partial_{kl}v) + \frac{1}{2} (Hz\cdot z) v = \eta v, \quad z \in \rr^d, 
\end{align*}
with $a^\eff_{ijkl}$ defined by \eqref{def:eff coefficients}, and $H = \nabla\nabla \bar c(0)$.
\end{theorem}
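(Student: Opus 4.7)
My plan is to mirror the four-stage argument used for Theorem~\ref{th:main}, observing that the defining strict inequality $\beta > 3\alpha$ of $R_1$ forces the second order term to vanish in the rescaled limit, which explains why the effective spectral problem contains only the fourth order and quadratic potential parts.

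First I would establish the analogue of Lemma~\ref{lm:est-lambda^eps}. Using $v^\ve(x) = v(\ve^{-\beta/6}x)$ for some smooth compactly supported $v$ in the variational principle, the fourth order term produces $O(\ve^{-2\beta/3})$, and the zero order term produces $\bar c(0)/\ve^{\beta} + O(\ve^{-2\beta/3})$ by (H4) and Lemma~\ref{lm:mvt}, the oscillatory remainder being of order $\ve^{2-2\beta/3}$ which is small thanks to $\beta < 3$. The second order term is of order $\ve^{\beta/3-\alpha} \cdot \ve^{-2\beta/3}$, hence subdominant thanks to $\beta > 3\alpha$. The matching lower bound on $\lambda_1^\ve$ and the bound $\|\Delta u_1^\ve\|_{2,\Omega} \le C\ve^{-\beta/3}$ follow from the weak equation combined with a Cauchy inequality, exactly as in \eqref{eq:frombelow}--\eqref{eq:fromabove}. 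Higher eigenvalues are handled by the projection argument at the end of the proof of Lemma~\ref{lm:est-lambda^eps}.

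Next I would rescale via $z = \ve^{-\beta/6}x$, $v^\ve(z) = u^\ve(\ve^{\beta/6}z)$, and $\eta^\ve = \ve^{2\beta/3}(\lambda^\ve - \bar c(0)/\ve^{\beta})$, on $\Omega_\ve = \ve^{-\beta/6}\Omega$. The rescaled weak formulation then reads
\begin{align*}
& \int_{\Omega_\ve} \hat a_{ijkl}^\ve \partial_{ij} v^\ve \partial_{kl} \varphi\,dz + \ve^{\beta/3-\alpha}\int_{\Omega_\ve} \hat b_{ij}^\ve \partial_i v^\ve \partial_j \varphi\,dz \\
& \qquad + \ve^{-\beta/3}\int_{\Omega_\ve} (\hat c^\ve - \bar c(0))\, v^\ve \varphi\,dz = \eta^\ve \int_{\Omega_\ve} v^\ve \varphi\,dz,
\end{align*}
with $\hat a^\ve, \hat b^\ve, \hat c^\ve$ evaluated at $(\ve^{\beta/6}z, z/\ve^{1-\beta/6})$. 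A shifted Green operator $G_\ve$ for the rescaled problem and its limit operator $G$ for the effective problem are then defined as in Section~\ref{sec:modelproblem}; a sufficiently large constant $\mu$ ensures coercivity of the corresponding bilinear forms, the $b^\ve$-integral being absorbed into the fourth order part via \eqref{eq:ax-6} and a Cauchy inequality, using the smallness of $\ve^{\beta/3-\alpha}$.

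The key analytic step is the analogue of Lemma~\ref{lm:pass to the limit}: testing with $\Phi_\ve = \ve^{2-\beta/3}\varphi(z)\psi(z/\ve^{1-\beta/6})$ identifies the two-scale corrector as $W(z,\zeta) = N_{kl}(\zeta)\partial_{kl}V(z)$ via \eqref{eq:N}, and then testing with $\varphi \in C_0^\infty(\rr^d)$ yields the effective equation. The new feature compared to Theorem~\ref{th:main} is that the second order term disappears in the limit due to the prefactor $\ve^{\beta/3-\alpha} \to 0$, while the zero order term still produces $\tfrac{1}{2}(Hz\cdot z)V$ by Taylor expansion plus an oscillatory remainder of order $\ve^{2-2\beta/3} \to 0$ (since $\beta < 3$). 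Uniform operator convergence of the Green operators then follows as in Lemma~\ref{lm:greenconvergence}, and Lemma~\ref{lm:uniform convergence} gives the desired convergence of eigenvalues and eigenfunctions. I expect the main obstacle to be verifying, in each of the three stages above, that the $b^\ve$-contribution is genuinely harmless: all three verifications rely on converting $\|\nabla V^\ve\|_{2,\Omega_\ve}^2$-type terms into combinations of $\|V^\ve\|_{2,\Omega_\ve}^2$ and $\|\Delta V^\ve\|_{2,\Omega_\ve}^2$ via \eqref{eq:ax-6}, and on exploiting the strict inequality $\beta > 3\alpha$ that characterises $R_1$ and fails precisely on its boundary $\beta = 3\alpha$, which is the region $R_2$ treated in Theorem~\ref{th:main}.
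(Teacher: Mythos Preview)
Your proposal is correct and follows essentially the same route as the paper: the same rescaling $z=\ve^{-\beta/6}x$, the same shifted Green operator, the same two-scale identification of the corrector via \eqref{eq:N}, and the same observation that the prefactor $\ve^{\beta/3-\alpha}$ in front of the $b$-term kills the second order contribution precisely because $\beta>3\alpha$ in $R_1$. The paper's own proof is more compressed---it skips the explicit eigenvalue estimates and goes straight to the Green operator convergence---but your more detailed outline is simply the full $R_2$ argument of Section~\ref{sec:modelproblem} carried over, which is exactly what the paper intends.
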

\begin{proof}

We shift the spectrum by $\bar c(0)/\ve^\beta$ and make the following the change of variables:
\begin{align*}
\gamma & = \frac{\beta}{6}, &
z & = \frac{x}{\ve^\gamma}, &
v(z) & = u(\ve^\gamma z), &
\eta^\ve & = \ve^{2\beta/3} \big(\lambda^\ve - \frac{\bar c(0)}{\ve^\beta}\big), &
z \in \Omega_\ve = \ve^{-\gamma}\Omega.
\end{align*}
Then we obtain the rescaled problem
\begin{align}\label{eq:case R_1 rescaled}
\begin{cases}
\hat A^\ve_1 v^\ve = \eta^\ve v^\ve, &  z \in \Omega_\ve,\\
v^\ve = \nabla v^\ve \cdot n = 0,    & z \in \partial \Omega_\ve, 
\end{cases}
\end{align}
where
\begin{align}\label{eq:AeffS1}
\hat A^\ve_1 v^\ve & =
\partial_{ij} (\hat a_{ijkl}^\ve \partial_{kl} \, v^\ve )
-
\ve^{ -\alpha + \beta/3 } \partial_i ( \hat b_{ij}^\ve\partial_j \, v^\ve )
+
\ve^{ -\beta/3 } (\hat c^\ve - \bar c(0) ) v^\ve,
\end{align}
and
\begin{align*}
\hat a_{ijkl}^\ve & = a_{ijkl}\big(\ve^{\beta/6}z, \frac{z}{\ve^{1-\beta/6}}\big), &
\hat b_{ij}^\ve & = b_{ij}\big(\ve^{\beta/6}z, \frac{z}{\ve^{1-\beta/6}}\big), &
\hat c^\ve & = c\big(\ve^{\beta/6}z, \frac{z}{\ve^{1-\beta/6}}\big).
\end{align*}
In order to describe the asymptotic behavior of the eigenpairs $(\eta_k^\ve, v_k^\ve)$, as $\ve \to 0$, we prove the uniform convergence of the corresponding Green operators and then use Lemma~\ref{lm:uniform convergence}.

Let $\mu > 0$ and $f_\ve$ be a sequence converging weakly in $L^2(\rr^d)$ to $f$.
Consider the boundary value problem
\begin{align}\label{eq:bvp1}
\begin{cases}
\hat A^\ve_1 V^\ve + \mu V^\ve  = f_\ve, & z \in \Omega_\ve, \\
V^\ve = \nabla V^\ve\cdot n = 0, & z \in \partial \Omega_\ve. 
\end{cases}
\end{align}
By (H1)--(H4), for all sufficiently small $\ve > 0$, the Green operator of \eqref{eq:bvp1} is a compact, self-adjoint and positive operator on $L^2(\rr^d)$.
Moreover, for the sequence of solutions $V^\ve$ to \eqref{eq:bvp1} we have
\begin{align*}
\| \nabla \nabla V^\ve \|_{2,\rr^d}
+
\| \nabla V^\ve \|_{2,\rr^d}
+ \| V^\ve \|_{2,\rr^d}
+ \| |z| V^\ve \|_{2,\rr^d}
& \le C.
\end{align*}
Thus, up to a subsequence,
\begin{align*}
V^\ve & \to V(z), &
\nabla V^\ve & \overset{2}\rightharpoonup \nabla V(z), &
\nabla \nabla V^\ve & \overset{2}\rightharpoonup \nabla \nabla V(z) + \nabla_\zeta \nabla_\zeta W(z, \zeta),
\end{align*}
where $V \in L^2(\rr^d)$, $W(z, \zeta) \in L^2(\rr^d; H^2 (\ttt^d))$.
Passing to the limit in the variational formulation of \eqref{eq:bvp1} we find that $V \in H^2(\rr^d) \cap L^2(\rr^d, |z|^2dz)$ is the unique solution to the equation
\begin{align}\label{eq:bvp1eff}
A^\eff_1 V + \mu V & = f, \quad z \in \rr^d,
\end{align}
where
\begin{align}
\label{eq:case R_1 limit operator}
A^\eff_1 V & = \partial_{ij} (a^\eff_{ijkl} \partial_{kl} V) + \frac{1}{2}(H z\cdot z) V,
\end{align}
and $a_{ijkl}^\eff$ is coercive on $\rr^{d\times d}$ and given by \eqref{def:eff coefficients}. Note that
the second order term vanishes because of the hypothesis $-\alpha + \beta/3 > 0$ and the boundedness of $\nabla V^\ve$. The
Green operator of \eqref{eq:bvp1eff}, as an operator on $L^2(\rr^d)$, is well-defined, is compact, self-adjoint, and positive.
Due to the uniqueness of the solution to \eqref{eq:bvp1eff}, the whole sequence $V^\ve$ converges to $V$.

In this way the Green operator of \eqref{eq:bvp1} converges uniformly to the Green operator of \eqref{eq:bvp1eff}, as $\ve \to 0$. By Lemma~\ref{lm:uniform convergence}, the spectrum of \eqref{eq:case R_1 rescaled} converges to the spectrum of the limit operator \eqref{eq:case R_1 limit operator} in the sense of Kuratowsky convergence of subsets of $\rr$. Changing back the variables yields the desired result.
\end{proof}


\section{The cases $(\alpha,\beta) \in R_3, R_4, R_5$}
\label{sec:R_3-R_5}

Recall that
\begin{align*}
R_3 & = \{ (\alpha, \beta) : 0 < \alpha < 2, \,\, \alpha<\beta<3\alpha, \,\, \beta < \alpha + 2\}, \\
R_4 & = \{ (\alpha, \beta) : \alpha=2, \,\, 2 < \beta < 4\}, \\
R_5 & = \{ (\alpha, \beta) : 2 < \alpha < 4, \,\, \alpha<\beta<4 \}.
\end{align*}
For these regions, the limit problems are of second order and have the same form, but the effective coefficients and the corresponding cell problems are different. Note that we should assume the coerciveness of the matrix $b(x, y)$ so that the effective problems are well-posed.
We gather the results for these cases in the following theorem.

Let the effective coefficients be defined by
\begin{align}
\label{eq:eff coeff R_3-R_5}
b_{ij}^\eff =
\begin{cases}
\int_{\ttt^d} b_{ij}(0,y)\,dy, & (\alpha, \beta)\in R_3,\\
\int_{\ttt^d} b_{ik}(0,y)( \delta_{kj} + \partial_{k}M_j ) \, dy, & (\alpha, \beta)\in R_4,\\
\int_{\ttt^d} b_{ik}(0,y)( \delta_{kj} + \partial_{k}N_j ) \, dy, & (\alpha, \beta)\in R_5.
\end{cases}
\end{align}
where $M_n \in H^2(\ttt^d)/\rr$ and $N_n \in H^1(\ttt^d)/\rr$ are the unique solutions to the respective cell problems
\begin{align}\label{eq:M_j}
\partial_{y_iy_j}( a_{ijkl}(0,y) \partial_{y_ky_l} M_n ) & - \partial_{y_i} ( b_{ij}(0,y) \partial_{y_j} M_n ) \!\!\!\!\!\! & = \partial_{y_i} b_{ni}(0,y), \quad y \in \ttt^d;\\
& - \partial_{y_i} ( b_{ij}(0,y) \partial_{y_j} N_n ) \!\!\!\!\!\! & = \partial_{y_i} b_{ni}(0,y), \quad y \in \ttt^d,\label{eq:N_j}
\end{align}

\begin{theorem}
\label{th:case R_3-R_5}
Let $(\alpha, \beta)\in R_3\cup R_4\cup R_5$ and let $(\lambda_k^\ve, u_k^\ve)$ be the $k$th eigenpair of \eqref{eq:orig-prob} normalized by $\|u_k^\ve\|_{2, \Omega}^2=\ve^{d(\beta-\alpha)/4}$.
Suppose that the conditions (H1)--(H5) are satisfied.
Then the following representation holds:
\begin{align*}
u_k^\ve(x) = v_k^\ve\Big(\frac{x}{\ve^{\frac{\beta-\alpha}{4}}}\Big), \quad
\lambda_k^\ve = \frac{\bar c(0)}{\ve^\beta} + \frac{\eta_k^\ve}{\ve^{\frac{\alpha+\beta}{2}}},
\end{align*}
where $(\eta^\ve_k, v_k^\ve)$ are such that as $\ve \to 0$,
\begin{enumerate}[(i)]
\item $\eta_k^\ve \to \eta_k$,
\item up to a subsequence, $v_k^\ve$ converges to $v_k$ weakly in $H^1(\rr^d)$ and strongly in $L^2(\rr^d)$,
\end{enumerate}
where $\eta_k$ is the $k$th eigenvalue, and $v_k$ is an eigenfunction corresponding to $\eta_k$ normalized by $\| v_k \|_{2,\rr^d} = 1$, of the harmonic oscillator problem
\begin{align}
\label{eq:eff prob R_3-R_5}
-\partial_{i}(b_{ij}^\eff \partial_{j}v) + \frac{1}{2} (Hz\cdot z) v & = \eta v, \quad z \in \rr^d,
\end{align}
with $b^\eff$ defined by \eqref{eq:eff coeff R_3-R_5}, and $H = \nabla\nabla \bar c(0)$.
\end{theorem}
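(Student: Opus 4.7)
The plan is to follow the same skeleton as the proof of Theorem~\ref{th:main}: derive a priori estimates, rescale and shift the spectrum, introduce Green operators, prove uniform convergence via a two-scale limit of the associated resolvent equation, and conclude through Lemma~\ref{lm:uniform convergence}. The new ingredients are the different scaling exponent $\gamma=(\beta-\alpha)/4$, a different dominant differential operator (the second order one), and the fact that three subregions $R_3$, $R_4$, $R_5$ give rise to three different cell problems for the same limit equation.

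First, I would adapt Lemma~\ref{lm:est-lambda^eps} to this range. Using $v^\ve(x)=v(x/\ve^\gamma)$ with $v\in C_0^\infty(\rr^d)$ as a trial function in the variational principle, together with Lemma~\ref{lm:mvt} applied to $c-\bar c$, one obtains the upper estimate $\lambda_1^\ve\le \bar c(0)/\ve^\beta + C\ve^{-(\alpha+\beta)/2}$. The matching lower estimate uses hypothesis (H5), the positivity of $H$ from (H4) and the Cauchy inequality to absorb the $\Delta u^\ve$ contribution produced by Lemma~\ref{lm:mvt}. This also yields an $H^1$ bound on the normalized eigenfunctions and an $H^2$ bound of order $\ve^{-(3\alpha-\beta)/4}$.

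Next, setting $z=x/\ve^\gamma$, $v^\ve(z)=u^\ve(\ve^\gamma z)$, and $\eta^\ve = \ve^{(\alpha+\beta)/2}(\lambda^\ve-\bar c(0)/\ve^\beta)$, a direct computation turns \eqref{eq:weak-orig-prob} into
\begin{align*}
\ve^{(3\alpha-\beta)/2}\partial_{ij}(\hat a^\ve_{ijkl}\partial_{kl}v^\ve)-\partial_i(\hat b^\ve_{ij}\partial_j v^\ve)+\frac{\hat c^\ve-\bar c(0)}{\ve^{(\beta-\alpha)/2}}v^\ve = \eta^\ve v^\ve
\end{align*}
on $\Omega_\ve=\ve^{-\gamma}\Omega$, where the hatted coefficients are evaluated at $(\ve^\gamma z,z/\ve^{1-\gamma})$. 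Since $(3\alpha-\beta)/2>0$ in all of $R_3\cup R_4\cup R_5$, the fourth order term is subdominant. I would then introduce, as in Section~\ref{Sec:rescaled problem}, the shifted Green operator $G_\ve$ acting in $L^2(\rr^d)$. Coercivity of the associated quadratic form on $H^1(\rr^d)\cap L^2(\rr^d,|z|^2dz)$ now follows from (H5), (H4) and Lemma~\ref{lm:mvt}, using the same Cauchy-inequality absorption trick that produced \eqref{eq:ax-8}. The extra $\ve^{(3\alpha-\beta)/2}\|\nabla\nabla V^\ve\|_{2,\Omega_\ve}^2$ term in the quadratic form provides a weak $H^2$ bound that is crucial for justifying two-scale compactness of the second derivatives in the regimes where the cell problem involves fourth order operators.

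The step where the three regions diverge is the identification of the two-scale limit corrector, and this is where the main obstacle lies. Taking test functions of the form $\ve^{2(1-\gamma)}\varphi(z)\psi(z/\ve^{1-\gamma})$ with $\varphi\in C_0^\infty(\rr^d)$, $\psi\in C^\infty(\ttt^d)$, and passing to the limit in the weak form, the cell equation for the corrector $W(z,\zeta)$ balances the two terms that originate from the fourth and second order operators: their ratio at the microscopic scale is precisely $\ve^{\alpha-2}$. Thus for $\alpha>2$ (region $R_5$) only the second order part survives in the cell equation and one recovers \eqref{eq:N_j}; for $\alpha=2$ (region $R_4$) both contributions enter and one recovers \eqref{eq:M_j}; for $\alpha<2$ (region $R_3$) the fourth order term formally dominates in the cell equation, but because $(3\alpha-\beta)/2>0$ the $H^2$ bound on $V^\ve$ blows up just slowly enough that the relevant corrector is forced to be $\zeta$-independent, so $W\equiv 0$ and $b^\eff$ reduces to the plain average. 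Making this last point rigorous — essentially verifying that under the available bounds the oscillating corrector vanishes in the $R_3$ regime — is the most delicate bookkeeping in the argument. Once the cell problem is correctly identified in each region, taking $\Phi^\ve=\varphi(z)$ and passing to the limit gives the weak form of \eqref{eq:eff prob R_3-R_5} with the effective coefficients \eqref{eq:eff coeff R_3-R_5}; uniform convergence of the Green operators follows as in Lemma~\ref{lm:greenconvergence}, and Lemma~\ref{lm:uniform convergence} together with undoing the rescaling finishes the proof.
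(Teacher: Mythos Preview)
Your proposal follows essentially the same route as the paper: the same rescaling $\gamma=(\beta-\alpha)/4$, the same Green-operator framework with uniform convergence via Lemma~\ref{lm:uniform convergence}, and the same trichotomy according to the sign of $\alpha-2$ (which the paper phrases as comparing the second-derivative exponent $(3\alpha-\beta)/4$ with the oscillation exponent $1-\gamma$). Two small corrections to the details: since the dominant operator is now second order, the oscillating test function that isolates the cell problem should carry the prefactor $\ve^{1-\gamma}$ rather than $\ve^{2(1-\gamma)}$ (it is the gradient, not the Hessian, whose corrector must be identified); and in $R_3$ the mechanism that kills the gradient corrector is precisely $(3\alpha-\beta)/4<1-\gamma$, i.e.\ $\ve^{1-\gamma}\nabla\nabla V^\ve\to 0$, not the weaker condition $(3\alpha-\beta)/2>0$, which holds throughout $R_3\cup R_4\cup R_5$.
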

\begin{proof}
We shift the spectrum by $\bar c(0)/\ve^\beta$ and make the following the change of variables:
\begin{align*}
\gamma & = \frac{\beta-\alpha}{4}, &
z & = \frac{x}{\ve^\gamma}, &
v(z) & = u(\ve^\gamma z), &
\eta^\ve & = \ve^{\frac{\alpha+\beta}{2}} \big(\lambda^\ve - \frac{\bar c(0)}{\ve^\beta}\big), &
z \in \Omega_\ve = \ve^{-\gamma}\Omega.
\end{align*}
Then we obtain the rescaled problem
\begin{align}\label{eq:case R_3-R_5 rescaled}
\begin{cases}
\hat A^\ve_2 v^\ve = \eta^\ve v^\ve, & z \in \Omega_\ve,\\
v^\ve = \nabla v^\ve \cdot n = 0, & z \in \partial \Omega_\ve, 
\end{cases}
\end{align}
where
\begin{align}\label{eq:AeffS2}
\hat A^\ve_2 v^\ve & =
\ve^{\alpha-2\gamma}\partial_{ij} (\hat a_{ijkl}^\ve \partial_{kl} \, v^\ve )
-
\partial_i ( \hat b_{ij}^\ve\partial_j \, v^\ve )
+
\ve^{ \alpha-\beta+2\gamma } (\hat c^\ve - \bar c(0) ) v^\ve,
\end{align}
and
\begin{align*}
\hat a_{ijkl}^\ve & = a_{ijkl}\big(\ve^\gamma z, \frac{z}{\ve^{1-\gamma}}\big), &
\hat b_{ij}^\ve & = b_{ij}\big(\ve^\gamma z, \frac{z}{\ve^{1-\gamma}}\big), &
\hat c^\ve & = c\big(\ve^\gamma z, \frac{z}{\ve^{1-\gamma}}\big).
\end{align*}
As above, to describe the asymptotic behavior of the eigenpairs $(\eta_k^\ve, v_k^\ve)$, as $\ve \to 0$, we prove the uniform convergence of the corresponding Green operators and then use Lemma~\ref{lm:uniform convergence}.

Let $\mu > 0$ and $f_\ve$ be a sequence converging weakly in $L^2(\rr^d)$ to $f$.
Consider the boundary value problem
\begin{align}\label{eq:bvp2}
\begin{cases}
\hat A^\ve_2 V^\ve + \mu V^\ve  = f_\ve, & z \in \Omega_\ve, \\
V^\ve = \nabla V^\ve\cdot n = 0, & z \in \partial \Omega_\ve.
 \end{cases}
\end{align}
By (H1)--(H5), for all sufficiently small $\ve > 0$, the Green operator of \eqref{eq:bvp2} is a compact, self-adjoint and positive operator in $L^2(\rr^d)$.
Moreover, for the sequence of solutions $V^\ve$ to \eqref{eq:bvp2} we have
\begin{align*}
\ve^{\frac{3\alpha-\beta}{4}}\| \nabla \nabla V^\ve \|_{2,\rr^d}
+
\| \nabla V^\ve \|_{2,\rr^d}
+ \| V^\ve \|_{2,\rr^d}
+ \| |z| V^\ve \|_{2,\rr^d}
& \le C.
\end{align*}
We proceed by dividing into the cases:
\begin{align*}
\frac{3\alpha-\beta}{4}
\begin{cases}
<1-\gamma, & (\alpha, \beta)\in R_3,\\
=1-\gamma, & (\alpha, \beta)\in R_4,\\
>1-\gamma, & (\alpha, \beta)\in R_5,
\end{cases}
\end{align*}
where $1-\gamma$ will be the scale in the two-scale convergence.

\subsection{$(\alpha, \beta)\in R_3$}
Up to a subsequence,
\begin{align*}
V^\ve & \to V(z), &
\nabla V^\ve & \overset{2}\rightharpoonup \nabla V(z), &
\ve^{\frac{3\alpha-\beta}{4}} \nabla \nabla V^\ve & \overset{2}\rightharpoonup W(z, \zeta),
\end{align*}
where $V \in H^1(\rr^d)$, $W(z, \zeta) \in L^2(\rr^d; H^2 (\ttt^d))$.
Passing to the limit in the variational formulation of \eqref{eq:bvp2} we find that $V \in H^1(\rr^d) \cap L^2(\rr^d, |z|^2dz)$ is the unique solution to the equation
\begin{align}\label{eq:bvp3eff}
-\partial_{i} (b^\eff_{ij} \partial_{j} V) + \frac{1}{2}(H z\cdot z) V + \mu V & = f, \quad z \in \rr^d, 
\end{align}
where $b_{ij}^\eff = \bar b_{ij}(0) = \int_{\ttt^d} b_{ij}(0,y)\,dy$ is coercive on $\rr^d$ by (H5).

\subsection{$(\alpha, \beta)\in R_4$}
In this case $\frac{3\alpha-\beta}{4} = 1 -\gamma$.
Up to a subsequence, 
\begin{align*}
V^\ve & \to V(z), &
\nabla V^\ve & \overset{2}\rightharpoonup \nabla V(z) + \nabla_\zeta W(z, \zeta), &
\ve^{\frac{3\alpha-\beta}{4}} \nabla \nabla V^\ve & \overset{2}\rightharpoonup \nabla_\zeta \nabla_\zeta W(z, \zeta),
\end{align*}
in $L^2(\rr^d)$, where $V \in H^1(\rr^d)$, $W(z, \zeta) \in L^2(\rr^d; H^2 (\ttt^d))$.
Passing to the limit in the variational formulation of \eqref{eq:bvp2} we find that $V \in H^1(\rr^d) \cap L^2(\rr^d, |z|^2dz)$ is the unique solution to the same equation \eqref{eq:bvp3eff} with
\begin{align}
\label{def:beff}
b_{ij}^\eff=\int_{\ttt^d} b_{ik}(0,y)( \delta_{kj} + \partial_{k}M_j ) \, dy,
\end{align}
where $M_j \in H^2(\ttt^d)/\rr$ solves \eqref{eq:N_j}.
By the periodicity of $b(x, y)$, $M_j$ is well-defined.

\begin{lemma}
\label{lm:corciveness b^eff}
Under the assumptions (H1)--(H3) and (H5), $b^\eff$ defined by \eqref{def:beff} is coercive on $\rr^d$.
\end{lemma}
\begin{proof}
By the definition of $b^\eff$,
\begin{align*}
b^\eff_{ij} & = \int_{\ttt^d} \delta_{il} b_{kl}(0,y) ( \delta_{kj} + \partial_{y_k}M_j ) \, dy.
\end{align*}
By using $M_i$ as a test function in the equation \eqref{eq:M_j} for $M_j$ we have
\begin{align*}
\int_{\ttt^d} b_{kl}(0,y)\partial_{y_l} M_i  (\delta_{kj} + \partial_{y_k}M_j) \, dy & = - \int_{\ttt^d} a_{pqrs}(0,y)\partial_{ y_r y_s } M_j \partial_{ y_p y_q } M_i \, dy.
\end{align*}
Therefore,
\begin{align*}
b^\eff_{ij} & = \int_{\ttt^d} b_{kl}(0,y) ( \delta_{li} + \partial_{y_l}M_i )( \delta_{kj} + \partial_{y_k}M_j ) \, dy \\
& \quad + \int_{\ttt^d} a_{pqrs}(0,y)(\partial_{ y_r y_s } M_j) (\partial_{ y_p y_q } M_i) \, dy,
\end{align*}
which shows that $b^\eff$ is symmetric.
Moreover, for $\xi \in \rr^d$, by the last equation, (H3), and (H5), we have
\begin{align*}
b^\eff_{ij}\xi_i \xi_j & \ge C \sum_l \int_{\ttt^d} | \xi_i(\delta_{li} + \partial_{y_l}M_i) |^2 \, dy,
\end{align*}
which shows that $b^\eff$ is nonnegative definite.
If $\xi \in \rr^d$ is such that $b^\eff_{ij}\xi_i \xi_j = 0$ we have by the last inequality that
$|\xi_i(\delta_{li} + \partial_{y_l}M_i)| = 0$ for all $l$.
In particular, $\nabla_y ( \xi_i( y_i + M_i ) ) = 0$ which by the periodicity of $M_i$ is only possible if $\xi = 0$.
Thus $b^\eff$ is positive definite on $\rr^d$.
By the compactness of the unit ball in $\rr^d$,
$b^\eff$ is coercive on $\rr^d$.
\end{proof}

\subsection{$(\alpha, \beta)\in R_5$}
Up to a subsequence,
\begin{align*}
V^\ve & \to V(z), &
\nabla V^\ve & \overset{2}\rightharpoonup \nabla V(z) + \nabla_\zeta N_j(z, \zeta)\partial_j V(z), &
\ve^{\frac{3\alpha-\beta}{4}} \nabla \nabla V^\ve & \overset{2}\rightharpoonup W_1(z, \zeta),
\end{align*}
in $L^2(\rr^d)$, where $V \in H^1(\rr^d)$, $W_1(z, \zeta) \in L^2(\rr^d \times \ttt^d)$, and $N_j \in H^1(\ttt^d)/\rr$ solves \eqref{eq:N_j}.
Passing to the limit in the variational formulation of \eqref{eq:bvp2} we find that $V \in H^1(\rr^d) \cap L^2(\rr^d, |z|^2dz)$ is the unique solution to \eqref{eq:bvp3eff} with
\begin{align*}
b_{ij}^\eff = \int_{\ttt^d} b_{ik}(0,y)( \delta_{kj} + \partial_{k}N_j ) \, dy.
\end{align*}
By the similar argument used in Lemma~\ref{lm:corciveness b^eff}, $b^\eff$ is coercive on $\rr^d$.

\bigskip

In this way, in all the three cases, the Green operator of \eqref{eq:bvp2} converges uniformly to the Green operator of \eqref{eq:bvp3eff}, as $\ve \to 0$. By Lemma~\ref{lm:uniform convergence}, the spectrum of \eqref{eq:case R_3-R_5 rescaled} converges to the spectrum of the limit operator \eqref{eq:eff prob R_3-R_5}. Changing back the variables yields the desired result.
\end{proof}


\section*{Acknowledgements}

This work was done during the stay of A. Chechkina at Narvik University College in 2014,
the hospitality of which is kindly acknowledged.

\bibliographystyle{plain}
\bibliography{refs}

\end{document}